\def\bbline#1{\mkern1mu\underline{#1\mkern-2mu}\mkern1mu}
\def\btline#1{\mkern2mu\overline{\mkern-2mu#1\mkern-0.5mu}\mkern0.5mu}
\def\bbeta{\bbline{\beta}}
\def\bs#1{\boldsymbol{#1}}
\def\colon{:}
\def\mx{\tilde{\bs x}}
\def\n{\underline{\mathfrak N}} 
\def\N{\overline{\mathfrak N}}
\def\nn{\widetilde{\mathfrak N}}
\def\e{\mathcal E}
\def\epsilon{\varepsilon}
\def\h{\mathcal H}
\def\el#1{E_s(#1)}
\def\fl#1{\{#1\}}
\def\fun{g_{s,d}}
\def\dist{{\rm dist\,}}
\def\diam{{\rm diam}}
\def\g{\bbline g_{s,d}}
\def\G{\btline g_{s,d}}
\def\geq{\geqslant}
\def\leq{\leqslant}
\def\weakto{\stackrel{*}{\longrightarrow}} 
\newtheorem{theorem}{Theorem}[section]
\newtheorem{lemma}[theorem]{Lemma}
\newtheorem{proposition}[theorem]{Proposition}
\newtheorem{corollary}[theorem]{Corollary}
\newtheorem{definition}[theorem]{Definition}
\newtheorem{thm}{Theorem}
\author{A. Reznikov}
\thanks{The research of A.R.\ was supported, in part, by the National Science Foundation grant DMS-1764398}
\author{O. Vlasiuk}
\thanks{O.\,V.\ was supported, in part, by the National Science Foundation grant DMS-1516400.}
\address{Department of Mathematics, Florida State University, Tallahassee, FL 32306}
\email{reznikov@math.fsu.edu}
\address{Center for Constructive Approximation, Department of Mathematics, Vanderbilt University, Nashville, TN 37240\newline
and\newline
Department of Mathematics, Florida State University, Tallahassee, FL 32306}
\email{vlasiuk@math.fsu.edu}
\begin{document}
\title{Riesz energy on self-similar sets} 
\begin{abstract}
We investigate properties of minimal $N$-point Riesz $s$-energy on fractal sets of non-integer dimension, as well as asymptotic behavior of $N$-point configurations that minimize this energy. For $s$ bigger than the dimension of the set $A$, we constructively prove a negative result concerning the asymptotic behavior (namely, its nonexistence) of the minimal $N$-point Riesz $s$-energy of $A$, but we show that the asymptotic exists over reasonable sub-sequences of $N$. Furthermore, we give a short proof of a result concerning asymptotic behavior of configurations that minimize the discrete Riesz $s$-energy. 
\end{abstract}
\date{\today}
\maketitle 

\vspace{4mm}

\footnotesize\noindent\textbf{Keywords}: Best-packing points, Cantor sets, Equilibrium configurations, Minimal discrete energy, Riesz potentials
\vspace{2mm}

\noindent\textbf{Mathematics Subject Classification:} Primary, 31C20, 28A78. Secondary, 52A40

\vspace{2mm}
\section{Introduction}
The minimal energy problem originates from potential theory, where for a compact set $A\subset \mathbb{R}^p$ and a lower semicontinuous kernel $K$ defined on $A\times A$, it is required to find
\begin{equation}\label{eq:contenergy}
I_K(A):=\inf_\mu \int K(x,y) \textup{d}\mu(x) \textup{d}\mu(y),
\end{equation}
where the infimum is taken over all probability measures supported on $A$; moreover, we are interested in the measure that attains this infimum. In this paper we focus on the {\it Riesz $s$-kernels} $K_s(x,y):=|x-y|^{-s}$. It is convenient to discretize the measure on which the value $I_K(A)$ is achieved; for this purpose, we consider the {\it discrete Riesz $s$-energy problem}. Namely, for every integer $N\geq 2$ we define
\begin{equation}\label{eq:discrenergy}
\e_s(A, N):=\inf_{\omega_N} E_s(\omega_N),
\end{equation}
where the infimum is taken over all $N$-point sets $\omega_N=\{\bs x_1, \ldots, \bs x_N\} \subset A$, and
\[
    E_s(\omega_N) := \sum_{i\neq j} |\bs x_i -\bs x_j|^{-s}, \qquad N = 2,3,4,\ldots
\]
Since the kernel $K_s$ is lower semicontinuous, the infimum is always attained. 

In general, asymptotics of energy functionals arising from pairwise interaction in discrete subsets has been the subject of a number of studies \cite{Hardin2005b,Hardin2004, DAMELIN2005845, Brauchart2006}; it has also been considered for random point configurations \cite{Brauchart2015a} and in the context of random processes \cite{Alishahi2015b, Beltran2016b}. The interest in such functionals is primarily motivated by applications in physics and modeling of particle interactions, as well as by the connections to geometric measure theory.

If $d$ is the Hausdorff dimension of $A$ and $s<d$, then there is a unique measure $\mu_{s, A}$ for which the infimum in \eqref{eq:contenergy} is achieved, and the configurations that attain the infimum in \eqref{eq:discrenergy} resemble $\mu_{s, A}$ in the weak$^*$ sense (for the precise definition, see below). When $s>d$, we have $I_{K_s}(A)=\infty$, as the integral in the RHS is infinite on all measures $\mu$ supported on $A$. However, for ``good'' sets $A$ (for example, $d$-rectifiable sets) with integer dimension $ d $, the configurations attaining \eqref{eq:discrenergy} resemble a certain special measure, namely, the uniform measure on $A$. 

More precisely, for a configuration $ \omega_N = \{\bs x_i : 1\leq i \leq N \}\subset A $ we define the (empirical) probability measure 
\[
    \nu_N := \frac{1}{N} \sum_{i=1}^N \delta_{\bs x_i},
\]
and we shall identify the two. Then, as summarized in the \textit{Poppy-seed bagel theorem} (PSB), see Theorem \ref{thm:psb}, under some regularity requirements on the set $A$, any sequence $\{\tilde \omega_N\colon \#\tilde\omega_N = N, \e_s(A, N)=E_s(\tilde \omega_N)\}$  converges to the normalized $ d $-dimensional Hausdorff measure $ \h_d(A\cap\cdot)/\h_d(A) $ on $ A $. Moreover, for such sets $ A $, the following limit exists:
\begin{equation}\label{eq:limittt} \lim_{N\to \infty}\frac{\e_s(A, N)}{N^{1+s/d}}.
\end{equation}


On the other hand, it has been established \cite[Proposition 2.6]{Borodachov2007} that for a class of self-similar fractals $ A $ with $ \dim_H A =d  $, the limit of $ \e_s(A,N)/N^{1+s/d} $ does not exist for $ s $ large enough. Using this observation, \cite{Calef2012} gives an example of a set $A$ and a sequence of optimal configurations for $\e_s(A, N)$ without a weak$ ^* $ limit.

In view of the above, it is natural to ask what can be said about weak$ ^* $ cluster points of $ \{\nu_N\colon N\geq 2\} $ in the case when the underlying set $ A $ is not $ d $-rectifiable; a characterization of the cluster points of $\{ \e_s(A, N)/N^{1+s/d}\colon N\geq 2 \} $ is likewise of interest.

The \hyperref[sec:prereqs]{following} section contains formal definitions and the necessary prerequisites; Section~\ref{sec:prior} gives an overview of previously established results, both in the case of a rectifiable and a non-rectifiable set $ A $. Sections~\ref{sec:main} and \ref{sec:proofs} contain the formulations of the main theorems and their proofs, respectively.

\section{Self-similarity and open set condition}
\label{sec:prereqs}

We shall be working with subsets of the Euclidean space $ \mathbb R^p $, using bold typeface for its elements: $ \bs x\in \mathbb R^p $. An open ball of radius $ r $, centered at $ \bs x $, will be denoted by $ B(\bs x,r) $. The $d$-dimensional Hausdorff measure of a Borel set $A$ will be denoted by $\h_d(A)$. 

A pair of sets $ A^{(1)},\, A^{(2)} $ will be called \textit{metrically separated} if $ |\bs x - \bs y|\geq \sigma >0  $ whenever $ \bs x\in A^{(1)} $ and $ \bs y \in A^{(2)} $. Recall that a \textit{similitude} $ \psi: \mathbb R^p \to \mathbb R^p $ can be written as
\[
    \psi(\bs x) = r O(\bs x) + \bs z
\]
for an orthogonal matrix $ O \in \mathcal O(p) $, a vector $ \bs z \in\mathbb R^p $, and a contraction ratio $ 0 <r< 1 $.  
The following definition can be found in \cite{Hutchinson}.
\begin{definition}
A compact set $ A\subset \mathbb R^p $ is called a \textit{self-similar fractal} with similitudes $ \left\{\psi_m\right\}_{m=1}^M $ with contraction ratios $ r_m,\, 1\leq m\leq M $ if
\[
    A = \bigcup_{m=1}^M \psi_m(A),
\]
where the union is disjoint\footnote{One also considers self-similar fractals where the union is not disjoint --- these are harder to deal with}.


We say that $A$ satisfies the \textit{open set condition} if there exists a bounded open set $ V\subset \mathbb R^p $ such that 
\[
    \bigcup_{m=1}^M \psi_m(V) \subset V,
\]
where the sets in the union are disjoint. 
\end{definition}
For a self-similar fractal $A$, it is known \cite{MR867284, MR1333890} that its Hausdorff dimension $\dim_H A = d$ where $d$ is such that 
\begin{equation}
    \label{eq:dim}
    \sum_{m=1}^M r_m^d =1.  
\end{equation}
It will further be used that if $ A $ is a self-similar fractal satisfying the open set condition, then there holds $ 0 < \h_d(A) < \infty $ and $ A $ is \textit{d-regular} with respect to $ \h_d $; that is, there exists a positive constant $ c $, such that for every $ r,\, 0 < r \leq \diam (A), $ and every $ \bs x \in A $,
\begin{equation}
    \label{eq:dregular}
    c^{-1} r^d \leq \h_d(A\cap B(\bs x, r)) \leq c r^d.
\end{equation}

\section{Overview of prior results}
\label{sec:prior}

Recall the standard definition of the weak$ ^* $ convergence: given a countable sequence $ \{\mu_N\colon  N\geq 1\} $ of probability measures supported on $ A $ and another probability measure $ \mu $,
\[
    \mu_N \weakto \mu,\, N\to \infty \quad \Longleftrightarrow \quad \int_A f(\bs x) d\mu_N(\bs x) \longrightarrow \int_A f(\bs x) d\mu(\bs x),\ N\to \infty,
\]
for every $ f\in C(A) $.
(Limits along nets are not necessary, as in this context weak$ ^* $ topology is metrizable.) We shall say that a sequence of discrete sets {\it converges} to a certain measure if the corresponding sequence of counting measures converges to it.

The set $ A $ is said to be {\it $ d $-rectifiable} if it is the image of a compact subset of $ \mathbb R^d $  under a Lipschitz map. Furthermore, we say that $ A $ is {\it $ (\mathcal H_d, d) $-rectifiable}, if
\begin{equation}\label{eq:assumption}
    A = A^{(0)} \cup \bigcup_{k=0}^\infty A^{(k)},
\end{equation}
where for $k\geq 1$ each $ A^{(k)} $ is $ d $-rectifiable and $ \mathcal H_d(A^{(0)}) = 0 $. 

We begin by discussing results dealing with the Riesz energy, both in the rectifiable and non-rectifiable contexts.
To formulate the PSB theorem, suppose $ s>d $ for simplicity; the case of $ s=d $ is similar, but requires stronger assumptions on the set $ A $. We write $ \mathcal M_d(A) $ for the $ d $-dimensional Minkowski content of the set $ A $ \cite[3.2.37--39]{federerGeometric1996}.
\begin{thm}[Poppy-seed bagel theorem, \cite{Hardin2004, Borodachov2008a}]\label{thm:psb}
    If the set $ A $ is $ (\mathcal H_d, d) $-rectifiable for $ s>d $ and $ \mathcal H_d(A) = \mathcal M_d(A) $,  then 
    \[
        \lim_{N\to \infty}\frac{\e(A, N)}{N^{1+s/d}} = \frac{C_{s,d}}{\mathcal H_d(A)^{s/d}},
    \]
    and every sequence $ \{ \tilde\omega_N \colon N\geq 2 \} $ achieving the above limit converges weak$ ^* $ to the uniform probability measure on $ A $:
    \[
        \frac1N \sum\limits_{\mx\in \tilde\omega_N} \delta_{\mx} \weakto \frac{\mathcal H_d(A\cap \cdot)}{\mathcal H_d(A)}.
     \] 
\end{thm}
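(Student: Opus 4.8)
The overall plan is to reduce to the model case of a cube in $\mathbb R^d$ and then build a general $(\h_d,d)$-rectifiable set out of finitely many pieces that are, up to distortion close to $1$, bi-Lipschitz images of subsets of $\mathbb R^d$. Write $\G(A):=\limsup_N\e_s(A,N)/N^{1+s/d}$ and $\g(A):=\liminf_N\e_s(A,N)/N^{1+s/d}$. For $U_d:=[0,1]^d$, I would first prove that $C_{s,d}:=\lim_N\e_s(U_d,N)/N^{1+s/d}$ exists in $(0,\infty)$. Finiteness of $\G(U_d)$ follows by bounding $\e_s(U_d,N)$ above, using monotonicity of $\e_s(U_d,\cdot)$ in $N$, by the energy of a regular grid of $\lceil N^{1/d}\rceil^d\geq N$ points. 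Positivity of $\g(U_d)$ follows from a nearest-neighbour estimate: if $r_i$ is the distance from $\bs x_i\in\omega_N$ to the other points, the balls $B(\bs x_i,r_i/2)$ are pairwise disjoint, so $\sum_i r_i^d\leq c$, whence $\el{\omega_N}\geq\sum_i r_i^{-s}=\sum_i(r_i^d)^{-s/d}\geq N(c/N)^{-s/d}$ by convexity of $t\mapsto t^{-s/d}$ (here $s>d$ is used). Equality $\G(U_d)=\g(U_d)$ is an almost-subadditivity argument: partition $U_d$ into $m^d$ congruent subcubes and place a rescaled near-optimal $N'$-point configuration inside a slightly shrunken concentric copy of each, so that distinct copies are metrically separated with a gap $\gtrsim1/m$; since $s>d$, the $O(N^2)$ cross-terms between subcubes are $o(N^{1+s/d})$, and letting $N\to\infty$, then $N'\to\infty$, then the shrinking factor $\to1$, gives $\G(U_d)\leq\g(U_d)$. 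By scaling, $\g(rU_d)=\G(rU_d)=C_{s,d}r^{-s}=C_{s,d}/\h_d(rU_d)^{s/d}$.

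\textbf{Step 2 (a calculus for $\g$ and $\G$).} I would next record: (i) scaling, $\G(rA)=r^{-s}\G(A)$, and isometry invariance (same for $\g$); (ii) monotonicity, $A\subseteq B\Rightarrow\e_s(A,N)\geq\e_s(B,N)$, hence $\G(A)\geq\G(B)$ and $\g(A)\geq\g(B)$; (iii) bi-Lipschitz comparison: if $\varphi$ distorts distances by a factor in $[1-\epsilon,1+\epsilon]$, then $(1+\epsilon)^{-s}\G(A)\leq\G(\varphi(A))\leq(1-\epsilon)^{-s}\G(A)$; and, crucially, (iv) for a \emph{finite} union $A=A_1\cup\dots\cup A_k$ of pairwise metrically separated sets with $\g(A_i)=\G(A_i)=:g_i\in(0,\infty)$ for each $i$, one has $\g(A)=\G(A)$ and $\G(A)^{-d/s}=\sum_{i=1}^k g_i^{-d/s}$. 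For the upper bound in (iv) one splits an $N$-point configuration into near-optimal blocks of sizes $N_i$ in the $A_i$; the cross-terms are $O(N^2)=o(N^{1+s/d})$, and minimizing $\sum_i(N_i/N)^{1+s/d}g_i$ over the simplex yields the value $\bigl(\sum_i g_i^{-d/s}\bigr)^{-s/d}$, attained at $N_i/N\propto g_i^{-d/s}$ (Lagrange multipliers, equivalently Hölder's inequality with exponents $1+s/d$ and its conjugate). The matching lower bound uses that any configuration on $A$ restricts to configurations on the $A_i$, whose energies are bounded below via Step 1 once the block sizes are large (blocks with boundedly many points contribute $O(1)$), while $\sum_iN_i=N$ forces the proportions to be asymptotically optimal. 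Sandwiching a $d$-ball between small cubes, (ii)--(iv) then also give $\g(R)=\G(R)=C_{s,d}/\h_d(R)^{s/d}$ for every $d$-ball $R\subset\mathbb R^d$.

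\textbf{Step 3 (the general set, and the role of the hypothesis).} For the lower bound $\g(A)\geq C_{s,d}/\h_d(A)^{s/d}$ I would localize: given small $\delta>0$, cover $\h_d$-almost all of $A$ by finitely many balls $B(\bs z_j,\delta)$ in each of which $A$ is $(1+\epsilon)$-bi-Lipschitz to a subset of a $d$-ball of radius $(1+\epsilon)\delta$ (possible by $d$-rectifiability and approximate tangent planes, discarding a set of arbitrarily small $\h_d$-measure), the number of balls being at most $(1+\epsilon)\h_d(A)/(\omega_d\delta^d)$; restricting an optimal configuration to these balls, applying Steps 1--2, and minimizing over how the points split --- using a crude upper bound $\e_s(A,N)\leq CN^{1+s/d}$ (from a grid inside one rectifiable piece) to rule out that a non-negligible fraction of the points lands on the discarded null set --- makes the powers of $\delta$ cancel and gives $\g(A)\geq(1-O(\epsilon))C_{s,d}/\h_d(A)^{s/d}$, then $\epsilon\to0$. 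For the upper bound $\G(A)\leq C_{s,d}/\h_d(A)^{s/d}$ I would use \eqref{eq:assumption}: since $\h_d(A^{(0)})=0$, monotonicity gives $\G(A)\leq\G\bigl(\bigcup_{k\geq1}A^{(k)}\bigr)$; all but an $\epsilon$-fraction of the $\h_d$-mass is carried by finitely many $A^{(k)}$, and that part is rewritten as a finite metrically separated union of near-isometric bi-Lipschitz images of compact sets $B_i\subset\mathbb R^d$, to which Step 2(iv) applies. The hypothesis $\h_d(A)=\mathcal M_d(A)$ enters precisely here: the number of $\delta$-cells needed to cover a compact $B\subset\mathbb R^d$ is $\sim\mathcal M_d(B)\,\delta^{-d}$, not $\h_d(B)\,\delta^{-d}$, so without control $\mathcal M_d(B_i)\approx\h_d(B_i)$ the construction loses points; but Minkowski content is additive over metrically separated pieces and satisfies $\mathcal M_d\geq\h_d$, so $\h_d(A)=\mathcal M_d(A)$ forces the total waste $\sum_i(\mathcal M_d(B_i)-\h_d(B_i))$ of a good decomposition to be negligible, yielding $\G(B_i)\leq C_{s,d}/\h_d(B_i)^{s/d}$ up to small error and matching the lower bound. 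I expect the main technical obstacle to be carrying out the rectifiable decomposition with the pieces \emph{simultaneously} metrically separated, near-isometric to subsets of $\mathbb R^d$, and inheriting $\mathcal M_d=\h_d$, while controlling the discarded null set.

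\textbf{Step 4 (the weak$^*$ limit).} The probability measures on the compact set $A$ being weak$^*$ sequentially compact, it suffices to show that any cluster point $\mu$ of $\{\nu_N\}$ along a sequence $\{\tilde\omega_N\}$ with $\el{\tilde\omega_N}/N^{1+s/d}\to C_{s,d}/\h_d(A)^{s/d}$ equals $\h_d(A\cap\cdot)/\h_d(A)$. If not, I would pick a closed ball $\bar B$ with $\h_d(A\cap\partial\bar B)=\mu(\partial\bar B)=0$ and $\beta:=\mu(\bar B)\neq\alpha:=\h_d(A\cap\bar B)/\h_d(A)$ (the crude upper bound again rules out $\tilde\omega_N$ carrying positive mass on an $\h_d$-null set, so $0<\alpha<1$). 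Perturbing the radius to separate $\bar B$ from its complement, one splits $A$, up to a negligible annulus, into two metrically separated $(\h_d,d)$-rectifiable pieces of $\h_d$-mass $\approx\alpha\h_d(A)$ and $\approx(1-\alpha)\h_d(A)$ carrying $(1+o(1))\beta N$ and $(1+o(1))(1-\beta)N$ points of $\tilde\omega_N$. Dropping the nonnegative cross-terms and invoking the lower bound of Step 3 on each piece gives
\[
\liminf_N\frac{\el{\tilde\omega_N}}{N^{1+s/d}}\;\geq\;\frac{C_{s,d}}{\h_d(A)^{s/d}}\Bigl(\alpha^{-s/d}\beta^{1+s/d}+(1-\alpha)^{-s/d}(1-\beta)^{1+s/d}\Bigr),
\]
and the strict case of the Hölder inequality from Step 2 makes the bracket strictly larger than $1$ whenever $\beta\neq\alpha$, contradicting asymptotic optimality. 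Hence $\mu(\bar B)=\h_d(A\cap\bar B)/\h_d(A)$ for a family of balls rich enough to determine $\mu$, so $\mu=\h_d(A\cap\cdot)/\h_d(A)$ and the whole sequence converges weak$^*$ to it.
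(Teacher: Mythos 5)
The paper does not prove Theorem~\ref{thm:psb}: it is quoted as background from \cite{Hardin2004, Borodachov2008a}, so your sketch can only be measured against the proofs there. Your Steps 1, 2 and 4 reproduce the standard architecture correctly: the cube limit via almost-subadditivity over congruent subcubes, the identity $\G(A)^{-d/s}=\sum_i g_i^{-d/s}$ for metrically separated pieces via the Lagrange/H\"older optimization, and the strict-convexity argument for the weak$^*$ limit (which is also the engine behind Theorem~\ref{thm:weakstar} in this paper). The problem is Step 3: the lower bound has a genuine gap, and the hypothesis $\h_d(A)=\mathcal M_d(A)$ is attached to the wrong half of the proof.

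Concretely, you restrict a near-optimal configuration to finitely many bi-Lipschitz charts covering all but an $\h_d$-small part of $A$, and claim that the crude bound $\e_s(A,N)\leq CN^{1+s/d}$ rules out a non-negligible fraction of the points landing on the discarded part. It does not: to conclude anything you would need to bound the energy of the points on the bad set from \emph{below}, and a set of small (even zero) $\h_d$-measure admits no such lower bound --- it can still have large $d$-dimensional upper Minkowski content and therefore hold $\beta N$ points at separation of order $N^{-1/d}$ with energy far below $C_{s,d}(\beta N)^{1+s/d}\h_d(A)^{-s/d}$. This is exactly the failure mode that the hypothesis $\h_d(A)=\mathcal M_d(A)$ exists to exclude, so a lower-bound argument that never invokes it cannot close. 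Conversely, the place where you do invoke it --- the upper bound --- does not need it: discarding $A^{(0)}$ and low-mass pieces only shrinks the set, monotonicity gives $\G(A)\leq\G(A')$ for the retained rectifiable part $A'\subseteq A$, and constructing good configurations on $A'$ requires no Minkowski-content control. The proof in \cite{Borodachov2008a} instead establishes $\g(A)\geq C_{s,d}\,\overline{\mathcal M}_d(A)^{-s/d}$ for \emph{every} compact set of finite upper Minkowski content --- no rectifiability, no charts --- by comparing $\omega_N$ with the full-dimensional $N^{-1/d}$-neighborhood of $A$, whose volume is precisely what $\overline{\mathcal M}_d(A)$ measures; the hypothesis then converts $\overline{\mathcal M}_d$ into $\h_d$. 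The same issue resurfaces in your Step 4: applying the lower bound to $A\cap\bar B$ requires $\overline{\mathcal M}_d(A\cap\bar B)=\h_d(A\cap\bar B)$, which must be deduced from the global hypothesis (via subadditivity of $\overline{\mathcal M}_d$ over the two pieces together with a lower bound for $\overline{\mathcal M}_d$ in terms of $\h_d$ on each) rather than taken for granted.
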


The smoothness assumptions on $ A $ in the above theorem are essential for existence of the limit of $\mathcal E(A, N)/N^{1+s/d}  $.
Let $ \{ \bbline\omega_N\subset A \colon \#\bbline\omega_N=N, N\in \n\}  $ be a sequence of configurations such that 
\begin{equation}
    \label{eq:lower}
    \lim_{\n\ni N\to\infty} \frac{E_s(\bbline\omega_N)}{N^{1+s/d}} = \liminf_{N\to\infty} \frac{\e_s(A, N)}{N^{1+s/d}} =: \g(A),
\end{equation}
and similarly, $ \{\btline\omega_N\subset A\colon \#\bbline\omega_N=N, N\in \N\}  $ a sequence for which
\begin{equation}
    \label{eq:upper}
    \lim_{\N\ni N\to\infty} \frac{E_s(\btline\omega_N)}{N^{1+s/d}} = \limsup_{N\to\infty} \frac{\e_s(A, N)}{N^{1+s/d}} =: \G(A).
\end{equation}
In the notation of \eqref{eq:lower}-\eqref{eq:upper}, the result about the non-existence of $ \lim_{N\to\infty} \e_s(A,N)/N^{1+s/d}$   from \cite{Borodachov2007} that was mentioned in the introduction can be stated as follows.
\begin{proposition}
    \label{prop:lowupfract}
    For a self-similar fractal $A$ with contraction ratios $r_1=\ldots=r_m$, there exists an $ S_0 > 0 $ such that for every $ s > S_0 $, 
    \[ 
        0 < \g(A) < \G(A) < \infty.
    \]
\end{proposition}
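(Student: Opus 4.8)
The plan is to exploit self-similarity to get a subadditivity-type relation for $\e_s(A,N)$ and then to observe that equipartition among the $M$ copies $\psi_m(A)$ is optimal only for very special values of $N$, which forces oscillation in $\e_s(A,N)/N^{1+s/d}$. Write $r = r_1 = \cdots = r_M$, so that by \eqref{eq:dim} we have $M r^d = 1$, i.e. $r = M^{-1/d}$. The first step is the \emph{upper bound}: distributing $N$ points among the $M$ scaled copies $\psi_m(A)$, with $n_m$ points in the $m$-th copy and $\sum_m n_m = N$, and discarding the (bounded, by metric separation of the $\psi_m(A)$) cross-interaction terms, gives
\[
    \e_s(A,N) \leq r^{-s}\sum_{m=1}^M \e_s(A,n_m) + C N^2,
\]
where $C$ depends only on the separation constant $\sigma$ and on $M$. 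Choosing $n_m$ as equal as possible yields, after iterating $k$ times with $N = M^k$, the bound $\e_s(A, M^k) \leq r^{-sk} M^k \,\e_s(A,1)$-type estimate; more carefully, a standard induction gives $\liminf_N \e_s(A,N)/N^{1+s/d} < \infty$. For the matching \emph{lower bound} one uses the $d$-regularity \eqref{eq:dregular} together with the classical fact (Borodachov–Hardin–Saff) that for $s > d$ and a $d$-regular set, $\e_s(A,N) \geq c\, N^{1+s/d}$ for some $c > 0$; this gives $\g(A) > 0$ and, combined with the upper bound, $\G(A) < \infty$.

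The heart of the matter is the strict inequality $\g(A) < \G(A)$. Here is where the hypothesis $r_1 = \cdots = r_M$ and "$s$ large" enter. The idea is that along the subsequence $N = M^k$ the natural self-similar configurations are essentially optimal and give the value $\g(A)$, whereas along a shifted subsequence such as $N = M^k + (M-1)$ (or more robustly $N$ with an "unbalanced" base-$M$ expansion) no distribution among the $M$ sub-copies can be balanced, so an unavoidable loss is incurred at \emph{every} scale of the recursive decomposition. Quantitatively, I would set up the self-similar recursion precisely: partitioning an optimal $N$-point configuration according to which $\psi_m(A)$ its points lie in, the near-separation of the copies means
\[
    \e_s(A,N) \geq r^{-s}\sum_{m=1}^M \e_s(A, n_m),
\]
up to a lower-order correction, for \emph{some} partition $(n_m)$ with $\sum n_m = N$. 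One then shows that the function $N \mapsto \e_s(A,N)/N^{1+s/d}$, call it $h(N)$, satisfies a renewal-type inequality $h(N) \gtrsim \min_{(n_m)} \sum_m (n_m/N)^{1+s/d} h(n_m)$, and by convexity of $t\mapsto t^{1+s/d}$ the minimum over $(n_m)$ summing to $N$ is attained at the balanced split and is \emph{strictly} larger than $h(\text{balanced value})$ unless $M\mid N$. Tracking the accumulated convexity gap down the recursion tree, and using that for $s$ large the exponent $1+s/d$ makes the convexity defect of an unbalanced split bounded below by a positive constant, yields $\limsup h > \liminf h$.

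The main obstacle — and the step I expect to require the most care — is converting the "unavoidable convexity loss at every scale" heuristic into a rigorous separation of $\limsup$ and $\liminf$, because the loss at a single scale is $O(1)$ in $h$ but one must prevent it from being silently absorbed into the lower-order $CN^2$ cross-terms or washed out as $N\to\infty$. The fix is to choose the test subsequence $N_k$ so that the base-$M$ digits of $N_k$ are bounded away from $0$ and from $r^{-s}$-weighted balance at a positive fraction of the $k$ scales, and to make the threshold $S_0$ large enough (depending on $M$ and $d$) that a single unbalanced digit contributes a loss exceeding the cumulative cross-term error; the cross-term bound $CN^2 = o(N^{1+s/d})$ precisely because $s > d$ makes $1 + s/d > 2$, which is where "$s$ large" is genuinely used. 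I would also need the elementary but slightly technical lemma that $\e_s(A,n)$ is, up to the stated lower-order terms, \emph{superadditive} under the self-similar splitting — this is what legitimizes passing the recursion through an optimal configuration rather than a constructed one, and it rests on the metric separation of $\psi_1(A),\ldots,\psi_M(A)$ guaranteed by the open set condition and disjointness in the definition of a self-similar fractal.
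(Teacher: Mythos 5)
Your upper bound (iterating the self-similar subdivision and discarding cross-terms, which are $O(N^2)=o(N^{1+s/d})$ since $s>d$) and your appeal to $d$-regularity for $0<\g(A)$ and $\G(A)<\infty$ are both correct and match the paper. The genuine gap is in the strict inequality $\g(A)<\G(A)$: the convexity-defect mechanism you propose does not produce an order-one separation. If an optimal $N$-point configuration splits as $(n_1,\dots,n_M)$ with the $n_m$ as equal as integrality permits, then
\[
\sum_{m=1}^{M}\Bigl(\frac{n_m}{N}\Bigr)^{1+s/d}=M^{-s/d}\bigl(1+O(M^2/N^2)\bigr),
\]
so the per-scale convexity defect is $O(N^{-2})$ in relative terms --- it is emphatically not ``bounded below by a positive constant'' for large cells, and summing the defects down the recursion tree the total is dominated by the bottom scale, where cells contain $O(1)$ points. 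Concretely, your proposed test subsequence $N=M^k+(M-1)$ provably fails: by the paper's perturbation lemma (the unnamed lemma preceding Corollary~\ref{cor:perturbation}), adding $o(M^k)$ points does not change the limit of $\e_s(A,N)/N^{1+s/d}$ along $N=M^k$, so this subsequence has the same limit as $N=M^k$ and cannot witness $\G(A)>\g(A)$.

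What is missing is a lower bound of the full order $N^{1+s/d}$, with an explicit constant, along a subsequence that perturbs $M^k$ by a \emph{constant fraction}. The paper takes $N=M^{k+1}+M^k$ and applies the pigeonhole principle to the partition of $A$ into the $M^{k+1}$ cells $A_{m_1\ldots m_{k+1}}$ of diameter $r^{k+1}$: at least $M^k$ pairs of points must share a cell, whence $\e_s(A,M^{k+1}+M^k)\geq M^k r^{-s(k+1)}=M^{s/d}(M^k)^{1+s/d}$. Comparing this with the explicit constructive upper bound $\e_s(A,M^{k+1})\leq (M^{k+1})^{1+s/d}\sigma^{-s}/(M^{s/d-1}-1)$ yields a ratio controlled by $R^s$ with $R=(r/\sigma)(1+r^d)^{1/d}$, which is less than $1$ for $s$ large; that comparison of two explicit constants --- not an accumulated convexity loss --- is what forces $\g(A)<\G(A)$. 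Your superadditive splitting inequality is correct and useful (it is essentially how Theorem~\ref{thm:subseq} is proved), but by itself it is consistent with $\e_s(A,N)/N^{1+s/d}$ being constant in $N$, so it cannot separate the $\limsup$ from the $\liminf$.
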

\noindent We remark that in the proof of Proposition~\ref{prop:lowupfract}, the number $S_0$ was not obtained constructively. In Theorem~\ref{thm:prec_lowupfract} we give a formula for $S_0$.  
The behavior of the sets $\omega_N$ that attain $\e_s(A, N)$ in the non-rectifiable case is still not fully characterized. The following proposition, taken from \cite{Calef2012}, is the only known negative result so far. 
\begin{proposition}
    \label{prop:calef}
    Assume that the two $ d $-regular compact sets $ A^{(1)},\, A^{(2)} $ are metrically separated and are such that $ A^{(1)} $ is a self-similar fractal with equal contraction ratios and $ \g(A^{(2)}) = \G(A^{(2)})  $. Then for any sequence of minimizers $\{\tilde \omega_N\subset A \colon \#\tilde\omega_N=N, E_s(\tilde\omega_N)=\e_s(A, N)\}$, the corresponding sequence of measures 
\[
 \tilde\nu_N = \frac1N \sum\limits_{\mx\in \tilde\omega_N} \delta_{\mx}
 \]
  does not have a weak$ ^* $ limit.
\end{proposition}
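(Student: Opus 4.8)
The plan is to analyze how minimizers of $\e_s(A,N)$ on the disjoint union $A = A^{(1)} \cup A^{(2)}$ split their points between the two metrically separated pieces, and to show that the split oscillates with $N$. Because $A^{(1)}$ and $A^{(2)}$ are metrically separated, the cross-interaction energy between points in $A^{(1)}$ and points in $A^{(2)}$ is bounded by $\sigma^{-s} N^2$ --- this is of lower order than the dominant $N^{1+s/d}$ term, so to leading order $E_s(\tilde\omega_N) \approx E_s(\tilde\omega_N \cap A^{(1)}) + E_s(\tilde\omega_N \cap A^{(2)})$. Writing $k = k(N)$ for the number of points a minimizer places in $A^{(1)}$ (so $N-k$ go to $A^{(2)}$), minimality forces $k(N)$ to be, up to lower-order corrections, the integer minimizing $\e_s(A^{(1)}, k) + \e_s(A^{(2)}, N-k)$. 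First I would make this reduction precise: any minimizer must restrict to a near-minimizer on each piece, and the optimal proportion $k/N$ is governed by comparing the two energy growth rates.

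Next I would use the hypotheses asymmetrically. On $A^{(2)}$ the limit $\g(A^{(2)}) = \G(A^{(2)})$ exists, so $\e_s(A^{(2)}, n) = (\g(A^{(2)}) + o(1)) n^{1+s/d}$ behaves like a genuine power. On $A^{(1)}$, by Proposition \ref{prop:lowupfract} (applicable for $s > S_0$ since $A^{(1)}$ is a self-similar fractal with equal contraction ratios), we have $\g(A^{(1)}) < \G(A^{(1)})$, and moreover --- this is the structural input I would extract from the self-similarity --- the normalized energy $\e_s(A^{(1)}, n)/n^{1+s/d}$ is \emph{asymptotically periodic in $\log n$}: because $A^{(1)}$ is the disjoint union of $M$ scaled copies of itself with ratio $r$, an optimal $n$-point configuration is built (up to lower order) by distributing points among the $M$ subcopies, giving a renewal-type relation $\e_s(A^{(1)}, n) \approx M \cdot r^{-s}\, \e_s(A^{(1)}, n/M) + (\text{cross terms})$, whose solutions have $\e_s(A^{(1)}, n)/n^{1+s/d}$ oscillating between $\g(A^{(1)})$ and $\G(A^{(1)})$ as $n$ ranges over, say, geometric-like subsequences. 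This is exactly the mechanism behind Proposition \ref{prop:lowupfract}, and I would lean on it rather than re-prove it.

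With both pieces understood, I would study the function $n \mapsto \min_{0 \le k \le n}\big(\e_s(A^{(1)},k) + \e_s(A^{(2)}, n-k)\big)$ and the location $k^*(n)$ of the minimizer. The claim is that $k^*(n)/n$ does not converge. Intuitively: when $n$ is such that $\e_s(A^{(1)}, \cdot)/(\cdot)^{1+s/d}$ is near its small value $\g(A^{(1)})$, piece $A^{(1)}$ is "cheap" and attracts a larger fraction of the points; when it is near $\G(A^{(1)})$, piece $A^{(1)}$ is comparatively "expensive" and attracts a smaller fraction. Since the optimal fraction is determined by a marginal-cost balance between a power law on $A^{(2)}$ and an oscillating density on $A^{(1)}$, and since the oscillation is genuine (the two values are strictly different), $k^*(n)/n$ must take at least two distinct values infinitely often --- hence along suitable subsequences $N_j \to \infty$ the fraction $k^*(N_j)/N_j$ has two distinct cluster points $\alpha < \beta$. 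Finally, weak$^*$ convergence of $\tilde\nu_N$ to some $\mu$ would force $\mu(A^{(1)}) = \lim k^*(N)/N$ to exist (using that $A^{(1)}, A^{(2)}$ are separated, so their indicator functions are continuous on $A$, making $\tilde\nu_N(A^{(1)})$ a continuous test functional); this contradicts the existence of two cluster points, and the proposition follows.

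The main obstacle I anticipate is making the marginal-cost/optimal-split argument rigorous: I need to control the error terms uniformly in $k$ when approximating $\e_s$ on each piece by its leading-order behavior, and to rule out the degenerate possibility that the optimal $k^*(n)$ happens to track the oscillation of $A^{(1)}$ in just such a way that $k^*(n)/n$ still converges (which would require a conspiracy between the periodic structure on $A^{(1)}$ and the power law on $A^{(2)}$ that the strict inequality $\g(A^{(1)}) < \G(A^{(1)})$ should preclude, but this needs a careful quantitative argument). A clean way around the delicate part may be to pick $N$ along two explicit subsequences --- one where $A^{(1)}$'s normalized energy is forced close to $\g(A^{(1)})$ and one where it is forced close to $\G(A^{(1)})$ --- and directly compare the optimal splits on these two subsequences, showing the resulting limiting fractions differ.
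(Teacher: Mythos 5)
The paper itself does not prove this proposition --- it is quoted from the reference [Calef 2012] --- so there is no in-paper argument to compare against; I am evaluating your proposal on its own terms. Your architecture is the right one: metric separation makes the cross-energy $O(N^2)=o(N^{1+s/d})$, so a minimizer must (up to $o(N^{1+s/d})$) minimize $\e_s(A^{(1)},k)+\e_s(A^{(2)},N-k)$ over the split $k$; weak$^*$ convergence of $\tilde\nu_N$ forces $k(N)/N\to\alpha$ because the indicator of $A^{(1)}$ is continuous on $A$; and the contradiction must come from playing the log-periodic oscillation of $\e_s(A^{(1)},n)/n^{1+s/d}$ (Theorem~\ref{thm:well_posed} together with $\g(A^{(1)})<\G(A^{(1)})$ --- note this is only guaranteed for $s$ large, so the hypothesis $s>S_0$ is implicitly needed, as you observe) against the genuine limit $g_2:=\g(A^{(2)})=\G(A^{(2)})$.

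The gap is exactly where you flag it, and it is a genuine one rather than a technicality. Writing $p=1+s/d$ and $h(t)=\fun(t)$, the assumption $k(N)/N\to\alpha\in(0,1)$ yields, for every phase $t$ and every competitor fraction $\beta\in(0,1)$,
\begin{equation*}
\alpha^{p}h(t)+(1-\alpha)^{p}g_2\ \leq\ \beta^{p}\,h\bigl(\{t+\log_M(\beta/\alpha)\}\bigr)+(1-\beta)^{p}g_2 .
\end{equation*}
Your marginal-cost heuristic would conclude that $\alpha$ must simultaneously be the optimal fraction for the cost levels $\g(A^{(1)})$ and $\G(A^{(1)})$, but only half of this goes through: since $h\leq\G(A^{(1)})$ everywhere, taking $t$ at the maximum of $h$ does pin $\alpha$ to the minimizer of $\beta^p\G(A^{(1)})+(1-\beta)^pg_2$. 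At the minimum of $h$, however, the competitors that actually sample the cheap value $\g(A^{(1)})$ are confined to the geometric grid $\beta\in\{\alpha M^{j}\}$, because changing $\beta$ shifts the phase $\{t+\log_M(\beta/\alpha)\}$ at which $h$ is evaluated; nothing forces a point of that grid to beat $\beta=\alpha$, so comparing your two explicit subsequences does not by itself produce two distinct limiting fractions --- the ``conspiracy'' is not excluded. One way to close the gap is to take $\beta=\alpha M^{\pm 1/n}$ in the displayed inequality, iterate $n$ times around one full period of $h$, and let $n\to\infty$: periodicity and continuity of $\fun$ then give $\max h\leq\bigl(\tfrac{1-\alpha}{\alpha}\bigr)^{s/d}g_2\leq\min h$, forcing $h$ to be constant and contradicting $\g(A^{(1)})<\G(A^{(1)})$. (You also need to dispose of $\alpha\in\{0,1\}$, which follows from the strict gain of moving a small positive fraction of points onto the empty piece, using $0<\g<\G<\infty$ for both $d$-regular sets.)
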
 
In view of these two propositions, it is remarkable that the local properties of minimizers of $ E_s $ are fully preserved on self-similar fractals.
Indeed, $ d $-regularity of $ A $ can be readily used to obtain that any sequence of minimizers of $ E_s $ has the optimal orders of separation and covering. The following result was proved in \cite{Hardin2012}:
\begin{proposition}
    If $ A\subset \mathbb R^p $ is a compact $ d $-regular set, $ \{ \tilde \omega_N \colon  N\geq 1 \}  $  a sequence of configurations minimizing $ E_s $ with $ \tilde\omega_N = \{\tilde{\bs  x}_i : 1\leq i \leq N \} $, then there exist a constant $ C_1 > 0 $ such that for any $ 1\leq i < j \leq N $,
    \[
        |\mx_i - \mx_j | \geq C_1 N^{-1/d},  \qquad N\geq 2,
    \]
    and a constant $ C_2 >0 $ such that for any $ \bs y \in A $,
    \[ 
        \min_i |\bs y - \mx_i| \leq C_2 N^{-1/d}, \qquad N \geq 2.
    \] 
\end{proposition}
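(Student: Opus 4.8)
The strategy is to reduce both the lower (separation) and the upper (covering) estimate to two scalar facts about the minimal energy of a compact $d$-regular set, both holding for $s>d$ with constants depending only on $A$ and $s$: the two-sided growth $c_0 N^{1+s/d}\leq\e_s(A,N)\leq C_0 N^{1+s/d}$, and the ``discrete-derivative'' bound $\e_s(A,N)-\e_s(A,N-1)\leq C' N^{s/d}$. (We take $s>d$, the range of interest here; finitely many small $N$ are absorbed into the constants, so only $N\to\infty$ matters.) Granting these, the proof is short, and I would carry out the work in the order (i) the two-sided bound on $\e_s(A,N)$; (ii) the discrete-derivative bound; (iii) separation; (iv) covering.

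Step (i): the upper bound is obtained by testing $\e_s(A,N)$ against an explicit configuration. By $d$-regularity $A$ contains a $\rho$-separated set of cardinality $\gtrsim\h_d(A)\rho^{-d}$, so for $\rho\asymp N^{-1/d}$ this produces $N$ admissible points, and a dyadic splitting of each point's potential together with the packing estimate $\#(\omega\cap B(\bs x,2^k\rho))\lesssim 2^{dk}$ bounds the total energy by $\lesssim N\rho^{-s}\asymp N^{1+s/d}$ (the series $\sum_k 2^{(d-s)k}$ converges since $s>d$). For the lower bound, partition $A$ into $\lesssim\rho^{-d}$ Borel pieces of diameter $\leq\rho$ — possible since the covering number of a $d$-regular set at scale $\rho$ is $\asymp\rho^{-d}$ and such a cover can be disjointified — and observe that, by convexity of $n\mapsto\binom n2$, any $N$-point configuration has $\gtrsim N^2\rho^{d}$ pairs of points lying in a common piece; each such pair contributes at least $\rho^{-s}$ to $E_s$, hence $E_s\gtrsim N^2\rho^{d-s}$, and $\rho\asymp N^{-1/d}$ gives $\e_s(A,N)\gtrsim N^{1+s/d}$.

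Step (ii), which I expect to be the crux. Let $\{\bs z_1,\dots,\bs z_{N-1}\}$ be optimal for $N-1$ points. Since the energy equals $\sum_i U_i$ with $U_i:=\sum_{j\neq i}|\bs z_i-\bs z_j|^{-s}$, some index $i_0$ has $U_{i_0}\leq\e_s(A,N-1)/(N-1)\leq C_0 N^{s/d}$. Adjoining a point $\bs y\in A$ changes the energy by $2\sum_j|\bs y-\bs z_j|^{-s}$, so it suffices to produce $\bs y$ with $\sum_j|\bs y-\bs z_j|^{-s}\lesssim U_{i_0}$; then $\e_s(A,N)-\e_s(A,N-1)\lesssim U_{i_0}\lesssim N^{s/d}$. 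I would look for $\bs y$ in the spherical shell $A\cap\bigl(B(\bs z_{i_0},t/2)\setminus B(\bs z_{i_0},t/2^{m})\bigr)$, where $t$ is the distance from $\bs z_{i_0}$ to its nearest neighbour in the configuration and $m=m(A)$ is a fixed integer; the role of the factor $2^{-m}$ is that $d$-regularity then forces this shell to have $\h_d$-measure $\gtrsim t^{d}$, no matter how large the regularity constant of $A$ is. On that shell no $\bs z_j$ is present, $|\bs y-\bs z_{i_0}|^{-s}\lesssim t^{-s}\leq U_{i_0}$, and each $|\bs y-\bs z_j|$ with $j\neq i_0$ is comparable to $|\bs z_{i_0}-\bs z_j|$; averaging $\sum_j|\bs y-\bs z_j|^{-s}$ over the shell therefore bounds its infimum over $A$ by $\lesssim U_{i_0}$. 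The delicate point is precisely this construction: the shell must avoid all the other $\bs z_j$ and yet carry a definite proportion of $\h_d$, which is why its thickness must be tuned to $A$ rather than fixed once and for all.

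Steps (iii) and (iv) are then quick. For separation, let $\tilde\omega_N=\{\bs x_1,\dots,\bs x_N\}$ be a minimizer with $\delta=\min_{i\neq j}|\bs x_i-\bs x_j|=|\bs x_1-\bs x_2|$; deleting $\bs x_1$ and using optimality for $N-1$ points gives $\e_s(A,N)=E_s(\tilde\omega_N\setminus\{\bs x_1\})+2\sum_{j\geq2}|\bs x_1-\bs x_j|^{-s}\geq\e_s(A,N-1)+2\delta^{-s}$, so $\delta^{-s}\leq\tfrac12\bigl(\e_s(A,N)-\e_s(A,N-1)\bigr)\leq\tfrac{C'}{2}N^{s/d}$, that is $\delta\geq C_1 N^{-1/d}$. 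For covering, suppose $B(\bs y_0,\tau)\cap\tilde\omega_N=\emptyset$ for some $\bs y_0\in A$. By the lower bound of (i), the largest point-potential of $\tilde\omega_N$ is $\geq\e_s(A,N)/N\geq c_0 N^{s/d}$, and moving the corresponding point to $\bs y_0$ cannot decrease the energy, so that potential is at most $\sum_j|\bs y_0-\bs x_j|^{-s}$. A dyadic estimate of this sum — using the separation bound of (iii) to count the points of $\tilde\omega_N$ in each annulus $B(\bs y_0,2^{k}\tau)\setminus B(\bs y_0,2^{k-1}\tau)$ — gives $\sum_j|\bs y_0-\bs x_j|^{-s}\lesssim N\tau^{d-s}$; combined with $c_0 N^{s/d}\lesssim N\tau^{d-s}$ this forces $\tau\leq C_2 N^{-1/d}$. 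Since $\min_i|\bs y-\bs x_i|$ is such a $\tau$ for every $\bs y\in A$, this is the covering bound.
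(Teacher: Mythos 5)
Your proof is correct, and it follows what is essentially the standard route — the paper itself does not prove this proposition (it quotes it from the cited 2012 reference), but the two ingredients you isolate are exactly the ones the paper records separately as Proposition 5.4 (the two‑sided bound $c_0N^{1+s/d}\leq\e_s(A,N)\leq C_0N^{1+s/d}$) and Corollary 5.5 (the point‑energy bound $\min_{\bs y\in A}\sum_j|\bs y-\bs x_j|^{-s}\leq CN^{s/d}$), and the deduction of separation by deleting a point and of covering by swapping a point plus a dyadic annulus count is the classical argument. The one place where you genuinely deviate from the usual write‑up is step (ii): the standard proof of the point‑energy bound averages $\sum_j|\bs y-\bs z_j|^{-s}$ over $A$ with small balls around the $\bs z_j$ removed (upper regularity controls both the deleted measure and each integral $\int_{A\setminus B(\bs z_j,\epsilon N^{-1/d})}|\bs y-\bs z_j|^{-s}\,d\h_d(\bs y)$), whereas you localize to an annulus $t/2^m\leq|\bs y-\bs z_{i_0}|\leq t/2$ around the point of below‑average potential. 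Your variant is valid — the annulus is nonempty because lower regularity of the inner ball beats upper regularity of the much smaller excluded core once $m$ is large relative to the regularity constant, it contains no other $\bs z_j$ since $t$ is the nearest‑neighbour distance, and then the bound $\sum_j|\bs y-\bs z_j|^{-s}\leq(2^{ms}+2^s)U_{i_0}$ holds pointwise on the annulus, so no averaging is even needed — and it buys a slightly more local, constructive placement of the new point at the cost of needing both the upper and lower regularity inequalities in that step (the averaging proof uses only the upper one, which is why the paper remarks that each of its two bounds needs only one inequality of \eqref{eq:dregular}). All remaining steps, including the convexity pigeonhole for the lower energy bound and the use of separation inside the covering estimate, are sound.
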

The closest one comes to an analog of the PSB theorem for self-similar fractals is the following proposition \cite{borodachovAsymptotics2012}. Note that we give a simpler proof of \eqref{it:liminfseq} for the case when $ A_0 = A $ in Theorem~\ref{thm:weakstar}.
\begin{proposition}
    Suppose $ A_0 $ is a self-similar fractal satisfying the open set condition and $ s>d $; fix a compact $ A \subset A_0 $.
    \begin{enumerate}
        \item\label{it:liminfseq}
            If $ \{\bbline\omega_N\colon  N\in\n\} $, is a sequence of configurations for which 
            \[
                \lim_{\n\ni N\to\infty} \frac{E_s(\bbline\omega_N)}{N^{1+s/d}} = \g(A),
            \]
            then the corresponding sequence of empirical measures converges weak$ ^* $:
            \[
            \bbline\nu_N \weakto \frac{\h_d(\cdot \cap A)}{\h_d(A)}, \qquad \n\ni N\to \infty.  \]
        \item 
            There holds
            \[
                \g(A) = \frac{\g(A_0)\mathcal H_d(A_0)^{s/d}}{\mathcal H_d(A)^{s/d}}
            \]
            and 
            \[
                \G(A) = \frac{\G(A_0)\mathcal H_d(A_0)^{s/d}}{\mathcal H_d(A)^{s/d}}.
            \] 
    \end{enumerate}
\end{proposition}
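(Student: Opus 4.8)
The plan is to establish the second statement (the identities for $\g$ and $\G$) first and deduce the first (weak$^*$ convergence) from it. Fix the self-similar $A_0$, and for a word $w=(m_1,\dots,m_n)$ write $\psi_w:=\psi_{m_1}\circ\cdots\circ\psi_{m_n}$, $r_w:=r_{m_1}\cdots r_{m_n}$; then $A_0=\bigsqcup_{|w|=n}\psi_w(A_0)$ for every $n$, the level-$n$ pieces being finitely many pairwise disjoint compacts, hence metrically separated with some $\sigma_n>0$. For the fixed compact $A\subset A_0$ put $A_w:=\psi_w^{-1}\big(A\cap\psi_w(A_0)\big)\subset A_0$, so that $A=\bigsqcup_{|w|=n}\psi_w(A_w)$; when $\h_d(A)>0$ let $\mu_A:=\h_d(\cdot\cap A)/\h_d(A)$. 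Since $A_0$ satisfies the open set condition it is $d$-regular, so $0<\g(A_0)\leq\G(A_0)<\infty$; abbreviate $\mathbf g:=\g(A_0)\,\h_d(A_0)^{s/d}$ and $\mathbf G:=\G(A_0)\,\h_d(A_0)^{s/d}$. If $\h_d(A)=0$, covering $A$ by pieces $\psi_{w_i}(A_0)$ with $\sum_i r_{w_i}^d$ arbitrarily small and discarding cross-interactions shows $\e_s(A,N)/N^{1+s/d}\to\infty$, so $\g(A)=\G(A)=\infty$ and both identities hold trivially while the weak$^*$ statement is vacuous; so assume $\h_d(A)>0$ below.

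Two facts drive the proof. The \emph{scaling identity} $\e_s(\psi(X),N)=r^{-s}\e_s(X,N)$ for a similitude $\psi$ of ratio $r$ gives $\g(\psi X)=r^{-s}\g(X)$ and $\G(\psi X)=r^{-s}\G(X)$; with $\h_d(\psi X)=r^d\h_d(X)$ this makes $\g(\cdot)\h_d(\cdot)^{s/d}$ and $\G(\cdot)\h_d(\cdot)^{s/d}$ similitude-invariant. The \emph{decoupling estimate}: for a disjoint union of compacts $B=B_1\sqcup\cdots\sqcup B_k$, metrically separated with constant $\sigma$, splitting an optimal configuration on $B$ among the pieces and conversely juxtaposing optimal configurations on the pieces yields $\min_{n_1+\dots+n_k=N}\sum_i\e_s(B_i,n_i)\leq\e_s(B,N)\leq\min_{n_1+\dots+n_k=N}\sum_i\e_s(B_i,n_i)+\binom{N}{2}\sigma^{-s}$, and the error $\binom{N}{2}\sigma^{-s}$ is $o(N^{1+s/d})$ because $s>d$. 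Applying this to $A=\bigsqcup_{|w|=n}\psi_w(A_w)$, using $\e_s(\psi_w(A_w),m)=r_w^{-s}\e_s(A_w,m)\geq r_w^{-s}\e_s(A_0,m)$, the elementary optimization $\min\{\sum_i g_i\beta_i^{1+s/d}\colon\beta_i\geq0,\ \sum_i\beta_i=1\}=(\sum_i g_i^{-d/s})^{-s/d}$, and letting the level $n\to\infty$ (the infimum over piece-covers of $\sum_i r_{w_i}^d$ equals $\h_d(A)/\h_d(A_0)$, and iterating recovers the sharp constants $\g(A_0),\G(A_0)$ in place of the crude $d$-regularity constant), gives the lower bounds $\g(A)\geq\mathbf g\,\h_d(A)^{-s/d}$ and $\G(A)\geq\mathbf G\,\h_d(A)^{-s/d}$. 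For the matching upper bounds: by the Lebesgue density theorem for $\h_d|_{A_0}$ relative to the net $\{\psi_w(A_0)\}$, $\h_d|_A$-a.e.\ point of $A$ lies, for all large $n$, in a level-$n$ piece in which $A$ has $\h_d$-density at least $1-\epsilon$; covering $A$ up to an $\h_d$-null set by a disjoint family of such pieces (Vitali), truncating to a finite subfamily $A''=\bigsqcup_{i\leq I}\psi_{w_i}((A_i)')\subset A$ with $\h_d(A'')\geq(1-\epsilon)\h_d(A)$, and combining decoupling and scaling with a perturbation bound --- a compact subset of $A_0$ of $\h_d$-measure at least $(1-\epsilon)\h_d(A_0)$ has energy asymptotics within a factor $1+o_\epsilon(1)$ of those of $A_0$ --- gives $\g(A)\leq\g(A'')\leq\mathbf g\,\h_d(A'')^{-s/d}\,(1+o_\epsilon(1))$; letting $\epsilon\to0$ closes the gap, and the same works for $\G$.

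For the weak$^*$ statement it suffices to show that every weak$^*$ cluster point $\nu$ of $\{\bbline\nu_N\colon N\in\n\}$ equals $\mu_A$. Fix $n$. The pieces $\psi_w(A_w)=A\cap\psi_w(A_0)$, $|w|=n$, are clopen in $A$ (closed, and open because the finitely many disjoint compacts $\psi_v(A_0)$ are mutually at distance $\geq\sigma_n$), hence $\nu$-continuity sets, so along a subsequence realizing $\nu$ the fractions $m_w(N):=\#\big(\bbline\omega_N\cap\psi_w(A_w)\big)/N$ converge to $\theta_w:=\nu(\psi_w(A_w))$, $\sum_w\theta_w=1$. Dropping cross-interactions, $E_s(\bbline\omega_N)\geq\sum_{|w|=n}\e_s\big(\psi_w(A_w),Nm_w(N)\big)$; dividing by $N^{1+s/d}$, passing to the limit along the subsequence, and using $\liminf_{m\to\infty}\e_s(\psi_w(A_w),m)/m^{1+s/d}=\g(\psi_w(A_w))$ on the indices $w$ with $\theta_w>0$ gives $\g(A)\geq\sum_w\g(\psi_w(A_w))\,\theta_w^{1+s/d}$. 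Substituting $\g(\psi_w(A_w))=\mathbf g\,\h_d(\psi_w(A_w))^{-s/d}$ and $\g(A)=\mathbf g\,\h_d(A)^{-s/d}$ and writing $p_w:=\mu_A(\psi_w(A_w))$, this becomes $1\geq\sum_w p_w^{-s/d}\theta_w^{1+s/d}$; but Jensen's inequality for the strictly convex $t\mapsto t^{1+s/d}$ with weights $p_w$ bounds the right-hand side below by $(\sum_w\theta_w)^{1+s/d}=1$, so equality holds throughout and $\theta_w=p_w$ for every $w$. As $n$ was arbitrary and the clopen pieces (with diameters tending to $0$) generate the Borel $\sigma$-algebra of $A$, $\nu=\mu_A$, which proves the weak$^*$ statement.

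The scaling identity, the decoupling estimate and the Jensen step are routine. I expect the main obstacle to be making rigorous the two heuristic points in the second statement: (i) the iteration that upgrades the crude $d$-regularity lower bound to the sharp constants $\g(A_0),\G(A_0)$ --- equivalently, the exact additivity $\g(B_1\sqcup\cdots\sqcup B_k)^{-d/s}=\sum_i\g(B_i)^{-d/s}$, whose delicate half (the upper bound on $\g(B_1\sqcup\cdots\sqcup B_k)$) requires choosing near-optimal configurations on the $B_i$ at \emph{commensurate} cardinalities, i.e.\ a stability-in-$N$ property of $\e_s(B_i,N)/N^{1+s/d}$ on $d$-regular sets; and (ii) the perturbation bound for an almost-full subset of $A_0$, which I would prove by relocating the few misplaced points off the small exceptional region and controlling the resulting increase of energy via $d$-regularity and $s>d$. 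These two estimates carry the weight of the argument.
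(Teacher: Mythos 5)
First, a point of reference: the paper itself does not prove this proposition in full. It proves part (1) only in the special case $A=A_0$ (that is Theorem~\ref{thm:weakstar}), and quotes the general statement, in particular part (2), from the cited source without proof. So the only basis for comparison is the proof of Theorem~\ref{thm:weakstar}. On that piece your argument coincides with the paper's: decouple the energy over the level-$n$ cylinders (Lemma~\ref{lem:locality}), pass to a weak$^*$ cluster point, force $\theta_w=r_w^d$ by strict convexity of $\beta\mapsto\beta^{1+s/d}$ (you use Jensen with weights $p_w$ where the paper uses a Lagrange multiplier --- the same fact), and conclude via the net of cylinders (Lemma~\ref{lem:splitting}). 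For $A=A_0$ your proof is complete, correct, and essentially identical to the paper's.

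Beyond that, the proposal has genuine gaps, which you partly acknowledge but do not close. Your part (1) for general compact $A\subset A_0$ is not self-contained: the Jensen step substitutes $\g(\psi_w(A_w))=\g(A_0)\,\h_d(A_0)^{s/d}\h_d(\psi_w(A_w))^{-s/d}$, i.e.\ it presupposes part (2) for every cylinder piece, so everything hinges on your sketch of part (2). That sketch rests on two unproved claims that carry essentially all of the weight. (i) The ``exact additivity'' $\g(B_1\sqcup\cdots\sqcup B_k)^{-d/s}=\sum_i \g(B_i)^{-d/s}$: only one half follows from decoupling plus convexity; the other half requires exhibiting, for the relevant $N$, a single split $N=\sum_i n_i$ with \emph{all} $n_i$ simultaneously near-extremal for their respective pieces, and since $\g$ and $\G$ are only a liminf and a limsup, the near-extremal cardinalities of distinct pieces need not be synchronizable. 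For $\G$ this is worse: the lower bound $\G(A)\geq \G(A_0)\h_d(A_0)^{s/d}\h_d(A)^{-s/d}$ needs a sequence of $N$ along which every piece is simultaneously near its limsup constant, which decoupling alone cannot produce; your remark that ``the same works for $\G$'' conceals a real difficulty. (ii) The perturbation bound --- that a compact $A'\subset A_0$ with $\h_d(A')\geq(1-\epsilon)\h_d(A_0)$ satisfies $\g(A')\leq \g(A_0)(1+o_\epsilon(1))$, and likewise for $\G$ --- is asserted, not proved. The natural route (restrict a near-minimizer on $A_0$ to $A'$) must control how many points land in $A_0\setminus A'$; Portmanteau bounds $\limsup_N\nu_N(C)$ from above only for \emph{closed} $C$, while $A_0\setminus A'$ is relatively open, so even this step requires an extra idea, and one must also repair the point count to hit a prescribed $N$ exactly. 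These two estimates are precisely the content of the proof in the cited reference and cannot be treated as routine; as written, part (2) and hence part (1) for $A\neq A_0$ remain unestablished.
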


We finish this section with another relevant result on fractal sets. In \cite{Borodachov2007} it was shown that, as $s\to \infty$, there is a strong connection between the $s$-energy $\e_s(A)$ and the {\it best-packing constant} 
$$
\delta(A, N):=\sup_{\omega_N} \min_{i\not=j} |\bs x_i - \bs x_j|.
$$
The main theorem of \cite{lalleyPacking1988} is given in terms of the function $N(\delta):=\max\{n\colon \delta(A, n)\geqslant \delta\}$. Our Theorem \ref{thm:well_posed} gives an analog of the second part of this theorem for the minimal discrete energy.
\begin{thm}
    Suppose $ A $ is a self-similar fractal of dimension $d$ satisfying the open set condition with contraction ratios $r_1,\ldots,r_m$.
    \begin{enumerate}
        \item If the additive group generated by $ \log r_1, \ldots, \log r_m $ is dense in $ \mathbb R $, then there exists a constant $ C $ such that
            \[
                \lim_{N\to \infty} N^{1/d} \delta(A, N)=\lim_{\delta\to 0} N(\delta)^{1/d} \delta=C.
            \]
        \item If the additive group generated by $ \log r_1, \ldots, \log r_M $ coincides with the lattice $ h\mathbb Z $ for some $ h>0 $, then
            \[
                \lim N(\delta)^{1/d}\delta=C_\theta, 
            \]
            where the limit is taken over a subsequence $\delta\to 0$ with $\left\{\frac1h \log \delta\right\} = \theta$.
    \end{enumerate}
\end{thm}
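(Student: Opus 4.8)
The plan is to follow the classical renewal-theoretic route: the self-similarity of $A$ forces a renewal equation for a rescaled version of the packing function $N(\delta)$, and Feller's renewal theorem — in its non-lattice and $h$-arithmetic forms — yields precisely the two cases. \emph{Step 1 (an exact recursion at small scales).} Since $A=\bigcup_{m=1}^M\psi_m(A)$ is a disjoint union of compact sets, there is $\sigma>0$ with $\dist(\psi_i(A),\psi_j(A))\geq\sigma$ for $i\neq j$. Every point of $A$ lies in exactly one $\psi_m(A)$, so a $\delta$-separated $\omega\subset A$ splits as $\omega=\bigsqcup_m(\omega\cap\psi_m(A))$, and $\psi_m^{-1}$ sends $\omega\cap\psi_m(A)$ to a $(\delta/r_m)$-separated subset of $A$ (because $\psi_m$ scales distances by $r_m$); conversely, for $\delta\leq\sigma$ a disjoint union of optimal packings of the pieces is again a $\delta$-packing of $A$. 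Counting points yields
\[
    N(\delta)=\sum_{m=1}^M N(\delta/r_m),\qquad 0<\delta\leq\sigma .
\]
I would also note the two-sided bound $c_1\delta^{-d}\leq N(\delta)\leq c_2\delta^{-d}$ for small $\delta$: the upper bound follows by comparing $\h_d$ of the disjoint balls $B(\bs x_i,\delta/2)$ of a best packing to $\h_d(A)$ via \eqref{eq:dregular}, and the lower bound from the fact that an inclusion-maximal $\delta$-separated set is a $\delta$-covering of $A$, again using \eqref{eq:dregular}.

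\emph{Step 2 (reduction to a renewal equation).} Set $t=-\log\delta$, $\ell_m=-\log r_m>0$, $p_m=r_m^d$, and $g(t):=e^{-dt}N(e^{-t})$. Because $\sum_m r_m^d=1$ by \eqref{eq:dim}, the recursion turns into $g(t)=\sum_{m=1}^M p_m\,g(t-\ell_m)$ for $t\geq t_0:=-\log\sigma$, with $\sum_m p_m=1$. Extending $g$ by $0$ to the left of $t_0-\max_m\ell_m$ and putting $z(t):=g(t)-\sum_m p_m g(t-\ell_m)$, the function $z$ is bounded, Riemann integrable, and supported in the bounded interval $[t_0-\max_m\ell_m,\,t_0]$ (it vanishes for $t\geq t_0$ by the recursion and for $t$ sufficiently negative by the extension), hence directly Riemann integrable; and $g$ is then the bounded solution of the renewal equation $g=z+g*\nu$, i.e.\ $g=z*U$ with $\nu=\sum_m p_m\delta_{\ell_m}$ a probability measure on $(0,\infty)$ and $U=\sum_{n\geq0}\nu^{*n}$. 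Since the group generated by $\{\ell_m\}$ equals the one generated by $\{\log r_m\}$, case (1) is exactly the statement that $\nu$ is non-lattice and case (2) that $\nu$ is $h$-arithmetic.

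\emph{Step 3 (renewal theorem and inversion).} Let $\mu=\sum_m p_m\ell_m=-\sum_m r_m^d\log r_m$ be the mean of $\nu$. In the non-lattice case the key renewal theorem gives $g(t)\to\mu^{-1}\int_{\mathbb R}z(u)\,du=:C^d$, which is finite and positive since $c_1\leq g\leq c_2$ on $[t_0,\infty)$; as $N(\delta)^{1/d}\delta=g(-\log\delta)^{1/d}$, this is the limit $\lim_{\delta\to0}N(\delta)^{1/d}\delta=C$. The equality $\lim_{N\to\infty}N^{1/d}\delta(A,N)=C$ then follows by the standard inversion of generalized inverses, using $N(\delta(A,N))\geq N>N(\delta(A,N)+\eta)$ for all $\eta>0$ together with $N(\delta)\sim C^d\delta^{-d}$. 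In the $h$-arithmetic case Feller's theorem instead gives convergence of $g$ along each coset of $h\mathbb Z$, with a limit depending on the coset; translating through $t=-\log\delta$ (so that fixing $\{\tfrac1h\log\delta\}=\theta$ amounts to restricting $t$ to one such coset) produces $\lim N(\delta)^{1/d}\delta=C_\theta$ along the corresponding subsequence.

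The step I expect to require the most care is Step 2: promoting the recursion $g(t)=\sum_m p_m g(t-\ell_m)$, valid only for $t\geq t_0$, to a bona fide renewal equation on all of $\mathbb R$ with a directly Riemann integrable forcing term, and then matching the non-lattice versus $h$-arithmetic dichotomy — and, in the arithmetic case, the phase $\theta$ — exactly to the hypotheses and the two conclusions of Feller's renewal theorem. The exact recursion of Step 1, the volume bounds from $d$-regularity, and the inversion closing Step 3 are all routine.
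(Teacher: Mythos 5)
This theorem is quoted in the paper from \cite{lalleyPacking1988} without proof, so there is no in-paper argument to compare against; your renewal-theoretic route is in fact the approach of the cited source, and as written it is correct in the setting of this paper. The one point worth flagging is that your exact recursion $N(\delta)=\sum_m N(\delta/r_m)$ for $\delta\leq\sigma$ genuinely uses the metric separation $\dist(\psi_i(A),\psi_j(A))\geq\sigma>0$ of the pieces. This is available here because the paper's definition of a self-similar fractal already requires the union $\bigcup_m\psi_m(A)$ to be disjoint (and disjoint compacta are metrically separated), but it is \emph{not} available under the open set condition alone, which is the generality in which Lalley proves the theorem: there the pieces may touch, the union of optimal packings of the pieces need not be $\delta$-separated, and one only obtains the recursion up to error terms --- absorbing those errors into a directly Riemann integrable forcing term is where most of the work in the original proof goes. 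Within the paper's conventions your Steps 1--3 are sound: the two-sided bound $c_1\delta^{-d}\leq N(\delta)\leq c_2\delta^{-d}$ from \eqref{eq:dregular} makes $g$ bounded and the limit positive; $z$ is bounded, compactly supported, and Riemann integrable (since $N$ is a monotone integer-valued step function), hence directly Riemann integrable; iterating $g=z+g*\nu$ and using that the extended $g$ vanishes near $-\infty$ yields $g=z*U$; and the non-lattice versus $h$-arithmetic dichotomy of the renewal theorem matches the two cases exactly, with the coset of $t=-\log\delta$ modulo $h\mathbb Z$ determined by $\bigl\{\tfrac1h\log\delta\bigr\}=\theta$. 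For the inversion closing Step 3, take $\eta=\epsilon\,\delta(A,N)$ rather than a fixed $\eta$ in $N(\delta(A,N))\geq N>N(\delta(A,N)+\eta)$, so that the asymptotics $N(\delta)\sim C^d\delta^{-d}$ apply to both sides before letting $\epsilon\to0$.
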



\section{Main results}
\label{sec:main}

In accordance with the prior notation, we write $ \bbline\omega_N = \{\bbline{\bs x}_i : 1\leq i \leq N \} $ for the sequence of configurations with the lowest asymptotics (i.e., such that \eqref{eq:lower} holds), and
\[
    \bbline \nu_N = \frac{1}{N} \sum_{i=1}^N \delta_{\bbline{\bs x}_i}, \quad N\in \n.
\]
As described above, generally the limit of $ \e_s(A, N)/N^{1+s/d}$,  $ N\to\infty $ does not necessarily exist. It is still possible to characterize the behavior of the sequence $ \{\bbline\omega_N\colon  N\in \n\} $. The following result first appeared in \cite{borodachovAsymptotics2012}; we give an independent and a more direct proof.
\begin{theorem}
    \label{thm:weakstar}
    Let $ A\subset \mathbb R^p  $  be a compact self-similar fractal satisfying the open set condition, and $ \dim_H A  = d < s $. If $ \{\bbline\omega_N\colon  N\in\n\} $, is a sequence of configurations for which 
    \[
        \lim_{\n\ni N\to\infty} \frac{E_s(\bbline\omega_N)}{N^{1+s/d}} = \g(A),
    \]
    then the corresponding sequence of empirical measures converges weak$ ^* $:
    \begin{equation}\label{eq:defhd}
        \bbline\nu_N \weakto h_d(\cdot):=\frac{\h_d(\cdot \cap A)}{\h_d(A)}, \qquad \n\ni N\to \infty.
    \end{equation}
\end{theorem}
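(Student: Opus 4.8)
The plan is to reduce the weak$^*$ convergence to a statement about how the configurations $\bbline\omega_N$ distribute their points among the natural cylinder pieces of $A$, and to prove that statement by induction on the cylinder level, the single-level estimate resting on an energy lower bound together with a sharp H\"older inequality. For a word $w=(m_1,\dots,m_k)$ let $\psi_w:=\psi_{m_1}\circ\dots\circ\psi_{m_k}$, a similitude of ratio $r_w:=r_{m_1}\cdots r_{m_k}$; then for each $k$ the union $A=\bigcup_{|w|=k}\psi_w(A)$ is disjoint, every $\psi_w(A)$ is clopen in $A$ with $\h_d(\psi_w(A))=r_w^d\,\h_d(A)$ (hence $h_d(\psi_w(A))=r_w^d$, since $0<\h_d(A)<\infty$), $\sum_{|w|=k}r_w^d=1$, and $\diam\psi_w(A)\to 0$ as $|w|\to\infty$. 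The indicator functions $\{\mathbf{1}_{\psi_w(A)}\}$ therefore span a subalgebra of $C(A)$ that contains the constants and separates points, so by the Stone--Weierstrass theorem it is dense in $C(A)$; consequently it suffices to prove that
\[
    \#\big(\bbline\omega_N\cap\psi_w(A)\big)/N\longrightarrow r_w^d,\qquad \n\ni N\to\infty,
\]
for every word $w$.

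For the base case $|w|=1$ I would argue as follows. Put $N_m:=\#(\bbline\omega_N\cap\psi_m(A))$ and $\sigma:=\min_{m\neq m'}\dist(\psi_m(A),\psi_{m'}(A))>0$. Separating off the between-piece interaction $B_N$, which satisfies $0\leq B_N\leq\sigma^{-s}N^2=o(N^{1+s/d})$ because $s>d$, and using that $\psi_m$ scales distances by $r_m$, one has
\[
    E_s(\bbline\omega_N)=B_N+\sum_{m=1}^M r_m^{-s}\,E_s\big(\psi_m^{-1}(\bbline\omega_N\cap\psi_m(A))\big)\;\geq\;\sum_{m=1}^M r_m^{-s}\,\e_s(A,N_m).
\]
Pass to a subsequence of $\n$ along which $N_m/N\to\alpha_m$ (so $\sum_m\alpha_m=1$). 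Combining $E_s(\bbline\omega_N)=\g(A)N^{1+s/d}(1+o(1))$ with the inequality $\e_s(A,n)\geq(\g(A)-\epsilon)n^{1+s/d}$ valid for all large $n$ — which is all that $\g(A)=\liminf$ supplies, so the indices with $\alpha_m=0$ are discarded using $\e_s\geq 0$ — and then dividing by $N^{1+s/d}$, letting $N\to\infty$, then $\epsilon\to 0$, and using $\g(A)>0$, I obtain
\[
    1\;\geq\;\sum_{m=1}^M r_m^{-s}\,\alpha_m^{1+s/d}.
\]
On the other hand, H\"older's inequality with exponents $1+s/d$ and $(s+d)/s$, together with $\sum_m r_m^d=1$, gives $\sum_m r_m^{-s}\alpha_m^{1+s/d}\geq(\sum_m\alpha_m)^{1+s/d}=1$, with equality only if $\alpha_m=r_m^d$ for every $m$. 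Hence $\alpha_m=r_m^d$, and as the subsequence was arbitrary, $N_m/N\to r_m^d$ along $\n$.

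For the induction, once $\alpha_m=r_m^d$ is known the inequalities above are forced to be asymptotic equalities, so $E_s(\psi_m^{-1}(\bbline\omega_N\cap\psi_m(A)))=\g(A)N_m^{1+s/d}(1+o(1))$; that is, $\psi_m^{-1}(\bbline\omega_N\cap\psi_m(A))\subset A$ is a configuration with $N_m\to\infty$ points whose normalized $s$-energy tends to $\g(A)$. Since the argument of the preceding paragraph used only these two features, it applies to this sub-configuration and gives $\#(\bbline\omega_N\cap\psi_{mw'}(A))/N_m\to r_{w'}^d$ for $|w'|=1$, whence $\#(\bbline\omega_N\cap\psi_{mw'}(A))/N\to r_m^d r_{w'}^d=r_{mw'}^d$; iterating over word length yields the displayed limit for every $w$, and the cylinder reduction of the first paragraph completes the proof.

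The step I expect to be the main obstacle is the base case: one has to extract a \emph{sharp} lower bound, with equality case, for $\sum_m r_m^{-s}\alpha_m^{1+s/d}$ — which is exactly where the convexity exponent $1+s/d$ and the dimension identity $\sum_m r_m^d=1$ enter, through H\"older — while at the same time coping with $\g(A)$ being only a $\liminf$, so that the estimate $\e_s(A,n)\geq(\g(A)-\epsilon)n^{1+s/d}$ is available only for large $n$ and the pieces receiving a vanishing fraction of the points have to be treated separately. The positivity $\g(A)>0$ for $d$-regular $A$ with $s>d$ (cf.\ Proposition~\ref{prop:lowupfract}) and the propagation of the $(1+o(1))$ errors through the scaling $N_m\approx r_m^d N$ in the induction are routine.
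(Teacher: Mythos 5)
Your proof is correct and follows essentially the same route as the paper's: decompose $A$ into the metrically separated cylinders, lower-bound the energy on each piece by a rescaled copy of $\e_s(A,\cdot)$, use the strict convexity of $\sum_m r_m^{-s}\beta_m^{1+s/d}$ under $\sum_m \beta_m=1$ to force $\beta_m=r_m^d$, recurse over word length, and pass from cylinder counts to weak$^*$ convergence. The only differences are cosmetic: you settle the optimization by H\"older's inequality with its equality case where the paper uses a Lagrangian, and you invoke Stone--Weierstrass where the paper proves the cylinder-to-weak$^*$ reduction directly via uniform continuity (Lemma~\ref{lem:splitting}); your explicit handling of pieces receiving $o(N)$ points is, if anything, slightly more careful than the paper's.
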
 

When the similitudes $ \{\psi_m \}_{m=1}^M $ fixing $ A $ all have the same contraction ratio, it is natural to expect some additional symmetry of minimizers, associated with the $ M $-fold scale symmetry of $ A $. Similarly, since the energy of interactions between particles in different $ A^{(m)} $ is at most of order $ N^2 $, see proof of Lemma~\ref{lem:locality} below, we expect that by acting with $ \{\psi_m \}_{m=1}^M $ on a minimizer $ \tilde\omega_N $ with $ N $ large, we obtain a near-minimizer with $ MN $ elements. This heuristic is made rigorous in the following theorem.
\begin{theorem}
    \label{thm:subseq}
Let $ A\subset \mathbb R^p $ be a self-similar fractal, fixed under $ M $ similitudes with the same contraction ratio, and $ \mathfrak M = \{M^k n\colon  k\geq 1 \} $. Then the following limit exists
\[
    \lim_{\mathfrak M \ni N\to \infty} \frac{\e_s(A,N)}{N^{1+s/d}}.
\]
\end{theorem}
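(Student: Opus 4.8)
The plan is to show that the sequence $a_N := \e_s(A,N)/N^{1+s/d}$, restricted to $N\in\mathfrak M=\{M^k n\colon k\geq 1\}$ (for a fixed starting value $n$), is a convergent sequence by exhibiting it as approximately monotone, or more precisely by sandwiching $a_{M^{k+1}n}$ between $a_{M^k n}$ and a quantity that tends to the same limit. The geometric mechanism is the $M$-fold self-similarity: since all contraction ratios equal a common value $r$, and $Mr^d=1$ by \eqref{eq:dim}, we have $r = M^{-1/d}$. Given a minimizer $\tilde\omega_L$ for $\e_s(A,L)$, the configuration $\bigcup_{m=1}^M \psi_m(\tilde\omega_L)$ is an $ML$-point subset of $A$, and its energy splits into the \emph{diagonal} part (interactions within a single $\psi_m(\tilde\omega_L)$) and the \emph{cross} part (interactions between distinct $\psi_m(\tilde\omega_L)$, $\psi_{m'}(\tilde\omega_L)$).

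First I would estimate the diagonal part exactly: each $\psi_m$ scales distances by $r$, so $E_s(\psi_m(\tilde\omega_L)) = r^{-s}E_s(\tilde\omega_L) = r^{-s}\e_s(A,L)$, and summing over the $M$ copies gives $M r^{-s}\e_s(A,L) = M^{1+s/d}\e_s(A,L)$. Dividing by $(ML)^{1+s/d}$ shows that this diagonal contribution, normalized, equals exactly $a_L$. Hence
\[
    a_{ML} \leq a_L + \frac{(\text{cross energy})}{(ML)^{1+s/d}}.
\]
Second, I would bound the cross energy. The sets $\psi_m(A)$ are pairwise disjoint compact sets, hence pairwise metrically separated, so there is a fixed $\sigma>0$ with $|\bs x-\bs y|\geq\sigma$ for $\bs x\in\psi_m(A)$, $\bs y\in\psi_{m'}(A)$, $m\neq m'$. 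Therefore each of the at most $M(M-1)L^2$ cross pairs contributes at most $\sigma^{-s}$, giving a cross energy of order $O(L^2)$ with a constant depending only on $M,s,\sigma$. Dividing by $(ML)^{1+s/d} \asymp L^{1+s/d}$ and using $s>d$ (so $1+s/d > 2$, i.e.\ $L^2 = o(L^{1+s/d})$), the cross term is $O(L^{\,2-1-s/d}) = O(L^{1-s/d}) \to 0$ as $L\to\infty$. Applying this with $L = M^k n$ yields
\[
    a_{M^{k+1}n} \leq a_{M^k n} + \epsilon_k, \qquad \sum_k \epsilon_k < \infty,
\]
since $\epsilon_k = O((M^k n)^{1-s/d})$ is summable (geometric with ratio $M^{1-s/d}<1$). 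A sequence satisfying $a_{k+1}\leq a_k+\epsilon_k$ with summable $\epsilon_k\geq 0$ has $a_k + \sum_{j\geq k}\epsilon_j$ nonincreasing, hence convergent; since $a_k$ is bounded below by $\g(A)>0$ (by \eqref{eq:lower}, or simply by Proposition~\ref{prop:lowupfract}), the limit is finite. This establishes existence of $\lim_{\mathfrak M\ni N\to\infty} a_N$.

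The main obstacle is making the almost-monotonicity argument genuinely rigorous in the direction of the \emph{lower} bound — i.e.\ ensuring that $a_{ML}$ cannot drop well below $a_L$ as $L\to\infty$, which is what actually forces convergence rather than merely a bounded limsup. For this I would argue in the reverse direction: given a near-minimizer $\tilde\omega_{ML}$ for $\e_s(A,ML)$, partition its points by which $\psi_m(A)$ they lie in (a measurable, essentially disjoint partition since the $\psi_m(A)$ are metrically separated), obtaining counts $N_1+\cdots+N_M = ML$ with $\sum N_m = ML$. Pulling back the points in $\psi_m(A)$ via $\psi_m^{-1}$ gives an $N_m$-point configuration in $A$ of energy $r^s$ times its energy in $A$; hence $\e_s(A,ML) \geq r^{s}\sum_m \e_s(A,N_m) = M^{-s/d}\sum_m \e_s(A,N_m)$, discarding the nonnegative cross terms. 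Using the known superadditivity-type lower bound $\e_s(A,N) \geq \g(A) N^{1+s/d}(1-o(1))$ together with the convexity of $t\mapsto t^{1+s/d}$ (minimized at $N_m = L$), one concludes $a_{ML} \geq a_L - o(1)$ along $\mathfrak M$, or more cleanly that $\liminf a_N \geq \limsup a_N$ along $\mathfrak M$ after combining with the upper estimate. The delicate point is handling the case where the $N_m$ are badly unbalanced; this is controlled because $\g(A)>0$ makes the lower bound genuinely quadratic-plus in each block, and the loss from imbalance is of the same lower order $o(1)$ as the cross-term error above. Combining the two inequalities pins down the limit.
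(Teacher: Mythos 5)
Your first three paragraphs are a correct and complete proof, and they follow essentially the same route as the paper: push a near-optimal configuration forward under the $M$ similitudes, note that the diagonal energy scales exactly by $Mr^{-s}=M^{1+s/d}$ while the cross terms are $O(N^2)=o(N^{1+s/d})$ by metric separation, and conclude via approximate subadditivity along $\mathfrak M$ (the paper telescopes the one-step inequality and runs a Fekete-style $\liminf$/$\limsup$ argument; your ``almost-nonincreasing with summable errors'' lemma is the same mechanism, phrased slightly more cleanly).

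Your final paragraph, however, rests on a misconception: no lower-bound (superadditivity) direction is needed. Once you have $a_{M^{k+1}n}\leq a_{M^k n}+\epsilon_k$ with $\epsilon_k\geq 0$ summable and $a_N\geq 0$, the quantity $a_{M^k n}+\sum_{j\geq k}\epsilon_j$ is nonincreasing and bounded below, hence converges, and so does $a_{M^k n}$ — the sequence cannot ``drop well below'' and still fail to converge, because a drop is perfectly compatible with (indeed forced toward) convergence by near-monotonicity plus boundedness. The pull-back/convexity sketch in that paragraph is the least rigorous part of your write-up (the unbalanced-$N_m$ case is genuinely delicate), but since it is not needed, you should simply delete it.
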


The previous theorem can be further extended. We shall need some notation first.
For a sequence $ \mathfrak N $, let 
\[
    \fl{\mathfrak N} := \lim_{\mathfrak N \ni N \to \infty} \{ \log_M N \},
\]
where $ \{\cdot\} $ in the RHS denotes the fractional part, and
\[
    \el{\mathfrak N} := \lim_{\mathfrak N \ni N \to \infty} \frac{\e_s(A,N)}{N^{1+s/d}},
\]
if the corresponding limit exists.
\begin{theorem}
    \label{thm:well_posed}
    If $ A $ is a  self-similar fractal with equal contraction ratios, and two sequences $ \mathfrak N_1, \, \mathfrak N_2 \subset \mathbb N $ are such that
    \begin{equation}
        \label{eq:logsequences}
        \fl{\mathfrak N_1} = \fl{\mathfrak N_2}, 
    \end{equation}
    then 
    \begin{equation}
        \label{eq:energylogsequences}
        \el {\mathfrak N_1} = \el {\mathfrak N_2}.  
    \end{equation}
    In particular, the limits in \eqref{eq:energylogsequences} exist. Moreover, the function $ \fun: \fl{\mathfrak N}\mapsto \el{\mathfrak N} $ is continuous on $ [0,1] $.
\end{theorem}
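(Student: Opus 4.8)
The plan is to deduce the whole statement from two estimates for the normalized energies $f(N):=\e_s(A,N)/N^{1+s/d}$: a one–step rescaling inequality coming from self-similarity, and a slow-variation bound. Write $r=M^{-1/d}$ for the common contraction ratio, so that \eqref{eq:dim} holds; since the sets $\psi_m(A)$ are compact and pairwise disjoint they are metrically separated, hence $A$ satisfies the open set condition and is $d$-regular. Applying $\psi_1,\dots,\psi_M$ to a minimizing $N$-point configuration and bounding the interactions between distinct cells by $O(N^2)$ via Lemma~\ref{lem:locality}, one obtains $\e_s(A,MN)\leq M^{1+s/d}\e_s(A,N)+CN^2$, that is,
\[
    f(MN)\leq f(N)+C\,N^{1-s/d},\qquad N\geq 2;
\]
since $s>d$ the correction is summable along geometric progressions of ratio $M$.

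Second, I would establish the slow-variation bound
\[
    |f(N)-f(N')|\leq C\,\frac{|N-N'|}{\min(N,N')},\qquad \tfrac12\leq \tfrac{N'}{N}\leq 2 .
\]
Monotonicity of $N\mapsto\e_s(A,N)$ handles one direction; for the other I would insert the required $|N-N'|\leq N$ points one at a time, at each step adding a point to a genuine minimizer. The covering radius of an $n$-point minimizer is $\gtrsim n^{-1/d}$, because $d$-regularity forces $\h_d(A)\leq n\cdot c\,\rho^d$ whenever balls of radius $\rho$ about the points cover $A$; hence one can always add a point whose potential against the current minimizer is $O(n^{s/d})$ — here the separation estimate of \cite{Hardin2012} together with $s>d$ makes $\sum_k k^{-s/d}$ converge. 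Summing the $O(N^{s/d})$ increments and dividing by $N^{1+s/d}$ gives the claim.

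The rest is soft. For $\theta\in[0,1]$ put $N_k(\theta):=\lceil M^{k+\theta}\rceil$, so that $N_{k+1}(\theta)=MN_k(\theta)+O(1)$; combining the two estimates yields $f(N_{k+1}(\theta))\leq f(N_k(\theta))+\delta_k$ with $\sum_k\delta_k<\infty$, so $b_m:=f(N_m(\theta))+\sum_{i\geq m}\delta_i$ is nonincreasing and bounded below, and therefore $f(N_k(\theta))$ converges to a number $\fun(\theta)$. If $\mathfrak N$ is any sequence with $\fl{\mathfrak N}=\theta$ and $N\in\mathfrak N$, then $k:=\lfloor\log_M N\rfloor\to\infty$ and $N/N_k(\theta)\to 1$, so slow variation gives $f(N)\to\fun(\theta)$; in particular $\el{\mathfrak N}$ exists and equals $\fun(\fl{\mathfrak N})$, which is exactly \eqref{eq:energylogsequences}. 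Finally, for $\theta,\theta'$ with $|\theta-\theta'|$ small one has $N_k(\theta)/N_k(\theta')\to M^{\theta-\theta'}$, whence slow variation gives $|\fun(\theta)-\fun(\theta')|\leq C\,|M^{\theta-\theta'}-1|\leq C'|\theta-\theta'|$; and $\fun(0)=\fun(1)$ since $(N_k(1))_k$ is a tail of $(N_k(0))_k$. Thus $\fun$ is Lipschitz, in particular continuous, on $[0,1]$.

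I expect the slow-variation estimate to be the only real difficulty: it rests on two-sided control of the covering radius of minimizers and on the summability of a single point's potential, both within reach from $d$-regularity and the separation bounds, but requiring some bookkeeping. (A convergence rate for $f(N_k(\theta))$, rather than bare convergence, would additionally need the reverse rescaling bound $\e_s(A,MN)\geq M^{1+s/d}\e_s(A,N)-CN^2$ — equivalently, that minimizers split among the $M$ first-level cells into blocks of size $N+o(N)$ — but this is not needed for the statement as given.)
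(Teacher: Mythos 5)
Your argument is correct, and it rests on exactly the same two estimates that drive the paper's proof: the self-similar rescaling inequality $\e_s(A,MN)\leq M^{1+s/d}\e_s(A,N)+CN^2$ (the first display in the proof of Theorem~\ref{thm:subseq}) and the one-point-insertion bound $\e_s(A,N+1)\leq \e_s(A,N)+CN^{s/d}$ of Corollary~\ref{cor:point_energy}, which you rederive for minimizers via the covering-radius lower bound plus the separation estimate. What differs is the assembly. The paper first establishes convergence along exact geometric sequences $\{M^k n\}$ (Theorem~\ref{thm:subseq}), then sandwiches an arbitrary $\mathfrak N$ between two such sequences $\{M^kn_1\}$ and $\{M^kn_2\}$ with $\{\log_M n_i\}$ within $\epsilon$ of $a$ on either side, and closes the resulting gap between $L_1$ and $L_2$ with Corollary~\ref{cor:perturbation}. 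You instead fix a single canonical reference sequence $N_k(\theta)=\lceil M^{k+\theta}\rceil$, prove convergence of $f(N_k(\theta))$, where $f(N)=\e_s(A,N)/N^{1+s/d}$, by the summable-error monotone trick (the auxiliary sequence $b_m=f(N_m(\theta))+\sum_{i\geq m}\delta_i$ is nonincreasing and bounded below), and transfer to an arbitrary $\mathfrak N$ with $\fl{\mathfrak N}=\theta$ via the slow-variation bound, since $N/N_{\lfloor \log_M N\rfloor}(\theta)\to 1$. This sidesteps the two-sided sandwich and the separate treatment of $a=1$, and it yields a Lipschitz modulus for $\fun$ rather than bare continuity, which is slightly stronger than what the paper claims. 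Two points you should make explicit in a full write-up: the direction $f(N)-f(N')\leq C|N-N'|/\min(N,N')$ of your slow-variation bound requires the uniform bound $\sup_N f(N)<\infty$ (available from $d$-regularity, cf.\ the remark following Proposition~\ref{lem:lowerupper}), and the count of minimizer points in annuli around the added point, needed for the convergence of $\sum_k k^{d-1-s}$, uses $d$-regularity of $A$ together with the separation bound, not separation alone.
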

In the case of equal contraction ratios, the argument in the proof of Theorem~\ref{thm:subseq}, can  be further used to make the result of Proposition~\ref{prop:lowupfract} more precise.
\begin{theorem}
    \label{thm:prec_lowupfract}
    Let $ A\subset \mathbb R^p $ be a self-similar fractal, fixed under $ M $ similitudes with the same contraction ratio $r$, and write $ \sigma := \min \{ \|\bs x - \bs y\| \colon  \bs x\in A_i,\, \bs y \in A_j, i\neq j \} $. If 
    \[
        R :=\frac r\sigma(1+r^d)^{1/d} < 1,
    \]
    then for for every value of $ s $ such that
    \begin{equation}
        \label{eq:sbound}
        s \geq \max \left\{ 2d,\, \log_{1/R} [2M(M+1)] \right\},
    \end{equation}
    there holds 
    \[
        0 < \g(A) < \G(A) < \infty.
    \]
\end{theorem}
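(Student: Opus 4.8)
\medskip
\noindent\textbf{Proof plan.}
The positivity $\g(A)>0$ and the finiteness $\G(A)<\infty$ require neither the hypothesis $R<1$ nor the lower bound on $s$: they follow from the $d$-regularity estimate~\eqref{eq:dregular} through the separation and covering bounds recalled above (equivalently, by iterating the self-similar splitting for the upper bound and comparing with best-packing for the lower one). So the real content is the strict inequality $\g(A)<\G(A)$, and the plan is to reduce it to the non-constancy of the profile $\fun$. By Theorem~\ref{thm:well_posed}, $\fun$ is continuous on $[0,1]$, and every subsequential limit of $\e_s(A,N)/N^{1+s/d}$ equals $\fun(\theta)$ for some $\theta$ --- pass to a subsequence along which $\{\log_M N\}$ converges and apply the theorem --- while conversely each value $\fun(\theta)$ is attained as such a limit. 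Hence $\g(A)=\min_{[0,1]}\fun$ and $\G(A)=\max_{[0,1]}\fun$, so it suffices to exhibit $\theta_1\neq\theta_2$ with $\fun(\theta_1)\neq\fun(\theta_2)$.

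For an \emph{upper} estimate I would take $\theta_1=0$, i.e., the subsequence $N=M^k$, and iterate the construction from the proof of Theorem~\ref{thm:subseq}: inside each subcopy $\psi_m(A)$ place a $\psi_m$-scaled copy of the $M^{k-1}$-point construction. Each inner energy is multiplied by $r^{-s}$ and there are $M$ of them, which matches the growth factor $(M^k/M^{k-1})^{1+s/d}=M^{1+s/d}=Mr^{-s}$ exactly (because $Mr^d=1$); what is left over is a geometric series of cross-copy corrections, each at most $\sigma^{-s}$ times a count of pairs, with ratio $\rho:=Mr^s=M^{1-s/d}<1$. Summing the series gives $\fun(0)\leq (M-1)(r/\sigma)^s/(1-\rho)$, and $s\geq 2d$ forces $\rho\leq M^{-1}\leq 1/2$, so that $\fun(0)\leq 2M(r/\sigma)^s$.

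For a \emph{lower} estimate I would take $\theta_2=\{\log_M(M+1)\}\in(0,1)$ and the subsequence $N_k=(M+1)M^k$. Since $N_k$ exceeds the number $M^{k+1}$ of level-$(k+1)$ copies $\psi_{m_1}\circ\cdots\circ\psi_{m_{k+1}}(A)$ by exactly $M^k$, any $N_k$-point configuration has at least $M^k$ pairs of points lying in a common such copy, hence within distance $r^{k+1}\diam A$; discarding all other interactions, $\e_s(A,N_k)\geq 2M^k(r^{k+1}\diam A)^{-s}$. Dividing by $N_k^{1+s/d}=(M+1)^{1+s/d}(Mr^{-s})^k$, the power of $k$ cancels and one is left with a genuine constant, $\fun(\theta_2)\geq 2(r\diam A)^{-s}(M+1)^{-1-s/d}$. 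A short computation --- simplifying via $Mr^d=1$ and the identity $R=\tfrac{r}{\sigma}(1+r^d)^{1/d}=\tfrac{1}{\sigma}\big(\tfrac{M+1}{M^2}\big)^{1/d}$ --- then shows that this lower bound overtakes the upper bound $2M(r/\sigma)^s$ for $\fun(0)$ once $(1/R)^s\geq 2M(M+1)$, i.e., once $s\geq\log_{1/R}[2M(M+1)]$; combined with $s\geq 2d$ this is exactly~\eqref{eq:sbound}, and the hypothesis $R<1$ is what makes this threshold finite.

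The step I expect to be delicate is this last, lower estimate: the pigeonhole crowding must be organized along a subsequence for which normalization by $N_k^{1+s/d}$ removes \emph{all} of the $k$-dependence (leaving a true constant, not a vanishing term), and one must then verify that this constant really does overtake the telescoped bound for $\fun(0)$ at precisely the threshold~\eqref{eq:sbound} --- this is where the geometry of $A$ (the separation $\sigma$, the contraction ratio $r$, the number $M$, and the hypothesis $R<1$) enters quantitatively, and where the requirement $s\geq 2d$ is spent to keep the cross-term corrections summable with a fixed margin.
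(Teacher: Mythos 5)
Your proposal is correct and follows essentially the same route as the paper: an upper bound along $N=M^{k}$ obtained by iterating the pushforward under $\bigcup_m\psi_m$ (the paper's estimate \eqref{eq:MNbound}), a lower bound along $N=M^{k+1}+M^{k}$ via pigeonhole into the $M^{k+1}$ cells $A_{m_1\ldots m_{k+1}}$ of diameter $r^{k+1}\diam(A)$, and the same algebra using $Mr^{d}=1$ to arrive at the threshold \eqref{eq:sbound}, with $s\geq 2d$ spent exactly as you describe on bounding the geometric-series factor by $2$. The only (harmless) difference is presentational: you route the comparison through the profile $\fun$ of Theorem~\ref{thm:well_posed}, whereas the paper compares $\limsup$ and $\liminf$ along the two subsequences directly, which needs nothing beyond the definitions of $\g$ and $\G$.
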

The proof of this theorem requires an estimate for the value of $ \mathcal E_s(A, M) $, which results in the condition $ R < 1 $. When $ \mathcal E_s(A, M) $ can be computed explicitly, a similar conclusion can also be obtained for sets that do not necessarily satisfy $ R<1 $, as in the following.
\begin{corollary}
    \label{cor:ternary_cantor}
    If $ A $ is the ternary Cantor set and $ s > 3 \dim_H A = 3 \log_3 2 $, then 
    \[
        0 < \g(A) < \G(A) < \infty.
    \]
\end{corollary}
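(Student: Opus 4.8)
The plan is to run the proof of Theorem~\ref{thm:prec_lowupfract}, whose only obstruction for the ternary Cantor set is that $R=(3/2)^{1/d}>1$; that hypothesis enters \emph{solely} through an a priori estimate for $\e_s(A,M)$, and for the ternary Cantor set this estimate can be replaced by an exact computation. Throughout, $M=2$, the first-level pieces are $A_1=A\cap[0,1/3]$ and $A_2=A\cap[2/3,1]$, $r=1/3$, $d=\log_3 2$, $\diam A=1$, and $r^{-s}=3^s=2^{s/d}$. The base case is immediate: for $N=2$, $E_s$ equals $2|\bs x_1-\bs x_2|^{-s}$, so it is minimized by taking $|\bs x_1-\bs x_2|$ as large as possible, namely equal to $\diam A=1$ at the endpoints $\{0,1\}$; hence $\e_s(A,2)=2$.

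For the upper bound on $\g(A)$, note that $\g(A)$ does not exceed the subsequential limit of $\e_s(A,2^k)/2^{k(1+s/d)}$, which exists by Theorem~\ref{thm:subseq}. Applying $\psi_1,\psi_2$ to an optimal $2^{k-1}$-point configuration produces a $2^k$-point configuration whose energy is at most $2\cdot3^s\,\e_s(A,2^{k-1})$ from the two rescaled copies, plus a cross term over the $2\cdot2^{k-1}\cdot2^{k-1}$ ordered pairs straddling the central gap, each at distance $\geq 1/3$, hence at most $2^{2k-1}3^s$. With $a_k:=\e_s(A,2^k)/2^{k(1+s/d)}$ and $2^{s/d}=3^s$ this reads $a_k\leq a_{k-1}+(2\cdot3^{-s})^{k-1}$; summing the geometric series (convergent since $s>d$) and using $a_1=\e_s(A,2)/2^{1+s/d}=3^{-s}$ gives $\g(A)\leq 3^{-s}+2/(3^s-2)$. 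A sharper version, likely needed for the precise threshold, replaces the crude bound $3^s$ on straddling pairs by the mutual energy $\iint_{A\times A}(2+y-x)^{-s}\,dh_d(x)\,dh_d(y)\leq 1$, whose contribution to the limit is of lower order.

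For the lower bound on $\G(A)$, I would choose values of $N$ that force points to bunch. Taking $N_k=3\cdot2^{k-1}$ (so that $\{\log_2 N_k\}\to\{\log_2 3\}$) and partitioning $A$ into its $2^k$ level-$k$ pieces of diameter $3^{-k}$, convexity of $t\mapsto t(t-1)$ applied to the occupation numbers $m_I$ gives $\sum_I m_I(m_I-1)\geq 2^k\cdot\tfrac32\cdot\tfrac12=\tfrac34\,2^k$ ordered pairs in a common level-$k$ piece, hence at distance $\leq 3^{-k}$; thus $\e_s(A,N_k)\geq\tfrac34\,2^k\,3^{ks}$, and dividing by $N_k^{1+s/d}=3^{1+s/d}\,2^{k-1}\,3^{(k-1)s}$ yields $\G(A)\geq\tfrac12\,3^{\,s-s/d}$. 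If one additionally shows by the usual exchange argument that optimal configurations split (almost) evenly between $A_1$ and $A_2$, the recursion improves to $\e_s(A,N_k)\geq(2\cdot3^s)^{k-1}\e_s(A,3)$, giving $\G(A)\geq\e_s(A,3)/3^{1+s/d}$ with $\e_s(A,3)=2\bigl(1+3^s+(3/2)^s\bigr)$; more generally, optimizing the split parameter $t$ in $N_k\approx 2^{k-1}(1+t)$ over $t\in(0,1)$ replaces $\tfrac12$ by the better constant obtained at $t=d/(s-d)$.

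The last step is to combine the two bounds and check that the lower bound on $\G(A)$ strictly exceeds the upper bound on $\g(A)$ exactly when $3^s>8$, i.e.\ when $s>3\log_3 2$; setting $x=s/d$ turns both sides into elementary functions of $x$ (with $3^s=2^x$ and the threshold becoming $x>3$). I expect this final quantitative comparison to be the main obstacle: the crude versions of the two bounds above separate only for $s$ appreciably larger than $3\log_3 2$, so the refinements indicated — the mutual-energy improvement of the $\g(A)$-estimate and the even-splitting (or exact $\e_s(A,3)$) improvement of the $\G(A)$-estimate — must be pushed through with enough precision that the inequality becomes tight precisely at $3^s=8$. Everything else (existence of the relevant subsequential limits, $d$-regularity, finiteness of $\G(A)$, positivity of $\g(A)$) is inherited from Theorems~\ref{thm:subseq} and \ref{thm:prec_lowupfract} and the $d$-regularity of $A$.
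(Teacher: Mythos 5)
Your strategy is the same as the paper's: bound $\g(A)$ from above by the limit along $N=2^k$ (which exists by Theorem~\ref{thm:subseq}), obtained by iterating $\psi$ on the optimal two-point configuration, and bound $\G(A)$ from below by the pigeonhole/Jensen count of pairs forced into a common level-$k$ cell when $N=3\cdot 2^{k-1}$. Both halves are correct as far as they go (your lower bound $\G(A)\geq\tfrac12\,3^{\,s-s/d}$ is even slightly sharper than the paper's $\tfrac1{2}(3/2)^{-(1+s/d)}$), but the step you yourself flag as ``the main obstacle'' is a genuine gap, not a routine verification. At $s=3d$ (so $s/d=3$, $3^s=8$) your explicit bounds read $\g(A)\leq 3^{-s}+2/(3^s-2)=11/24$ and $\G(A)\geq 4/27$, which are entirely compatible with $\g(A)=\G(A)$; they separate only for $s/d$ greater than about $6.3$, i.e.\ for $s$ roughly twice the claimed threshold. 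The refinements you sketch to close this are each substantive and are not carried out: replacing the crude straddling-pair bound by $N^2\iint(2+y-x)^{-s}\,dh_d(x)\,dh_d(y)$ presupposes that the optimal $2^{k-1}$-point configurations are asymptotically $h_d$-equidistributed, which Theorem~\ref{thm:weakstar} supplies only along $\liminf$-achieving subsequences; the (almost) even splitting of minimizers between $A_1$ and $A_2$ and the identification of the optimal three-point configuration are asserted, not proved. As written, your argument proves the conclusion for an explicit but larger range of $s$, not for all $s>3\log_32$.

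For comparison, the paper reaches the threshold $s\geq 3d$ by taking the already-summed inequality \eqref{eq:MboundL} and substituting the exact value $\e_{s,2}$ in place of the crude estimate $\sigma^{-s}M^2$, arriving at $\g(A)\leq\frac{1}{4(2^{s/d-1}-1)}$; combined with the pigeonhole lower bound this gives the ratio $(3/4)^{s/d}\cdot\frac{2^{s/d-1}}{2^{s/d-1}-1}\cdot\frac32<1$ for $s\geq 3d$. Be aware, though, that in \eqref{eq:MboundL} the factor $\sigma^{-s}$ in the second term comes from the cross-interactions accumulated over all levels of the recursion, not from $\e_{s,M}$, so it is not obviously improved by knowing $\e_{s,2}$ exactly; your term-by-term recursion, which keeps those cross terms as $\sigma^{-s}N^2$, is the honest output of this construction and yields the larger constant $3^{-s}+2/(3^s-2)$. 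Reconciling these two constants (or genuinely improving the cross-term estimate, e.g.\ via the mutual-energy idea made rigorous) is precisely what must be done before either version of the argument is complete at $s=3\log_32$.
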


\section{Proofs}\label{sec:proofs}
The key to proving Theorem \ref{thm:weakstar} is that the hypersingular Riesz energy grows faster than $ N^2 $. We shall need this property in the following form.
\begin{lemma}
    \label{lem:locality}
    Let a pair of compact sets $ A^{(1)} $, $ A^{(2)} \subset \mathbb R^p $ be metrically separated; let further $ \{\omega_N\subset A\colon N\in \mathfrak N\} $  be a sequence for which the limits  
    \[
        \lim_{\mathfrak N \ni N \to \infty} \frac{\#(\omega_N\cap A^{(i)})}{N} = \beta^{(i)}, \quad i =1,2.
    \]
    exist. Then 
    \[
        \begin{aligned}
            \liminf_{\mathfrak N \ni N \to \infty} \frac{E_s(\omega_N)}{N^{1+s/d}} &\geq\\
            \left(\beta^{(1)}\right)^{1+s/d}& \liminf_{\mathfrak N\ni  N\to\infty}  \frac{E_s(\omega_N \cap A^{(1)})}{\#(\omega_N\cap A^{(1)})^{1+s/d}}
            + \left(\beta^{(2)}\right)^{1+s/d} \liminf_{\mathfrak N\ni N\to\infty} \frac{E_s(\omega_N \cap A^{(2)})}{\#(\omega_N\cap A^{(2)})^{1+s/d}}.
        \end{aligned}
    \]
\end{lemma}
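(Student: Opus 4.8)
The plan is to split the energy along the two pieces $ \omega_N^{(i)} := \omega_N\cap A^{(i)} $, which are disjoint because metric separation forces $ A^{(1)}\cap A^{(2)}=\emptyset $. Writing $ N_i:=\#\omega_N^{(i)} $ and using that every summand in $ E_s $ is positive, one may discard all pairs not contained in a single piece to obtain
\[
    E_s(\omega_N)\;\geq\; E_s\!\bigl(\omega_N^{(1)}\bigr)+E_s\!\bigl(\omega_N^{(2)}\bigr).
\]
(When $ \omega_N\subseteq A^{(1)}\cup A^{(2)} $ the gap here is exactly the cross-interaction sum, which by $ |\bs x-\bs y|\geq\sigma $ is at most $ 2\sigma^{-s}N_1N_2\leq\tfrac12\sigma^{-s}N^2=o(N^{1+s/d}) $ since $ s>d $; but only the displayed inequality is needed below.)

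Next I would divide by $ N^{1+s/d} $ and factor each term as
\[
    \frac{E_s(\omega_N^{(i)})}{N^{1+s/d}}=\Bigl(\tfrac{N_i}{N}\Bigr)^{1+s/d}\cdot\frac{E_s(\omega_N^{(i)})}{N_i^{\,1+s/d}},\qquad i=1,2,
\]
whose first factor tends to $ (\beta^{(i)})^{1+s/d} $ by hypothesis. If $ \beta^{(i)}=0 $ for some $ i $, the corresponding term on the right-hand side of the asserted inequality is read as $ 0 $ (conventions $ 0/0=0\cdot\infty=0 $), and since $ E_s\geq 0 $ the inequality collapses to the one with that term removed; so one may assume $ \beta^{(1)},\beta^{(2)}>0 $, in which case $ N_i\to\infty $, the sets $ \omega_N^{(i)} $ are eventually nonempty, and the ratios $ E_s(\omega_N^{(i)})/N_i^{1+s/d} $ are well defined for all large $ N $.

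It then remains to pass to the lower limit. By superadditivity of $ \liminf $, followed by the elementary identity $ \liminf_N(c_Nx_N)=c\,\liminf_N x_N $ valid whenever $ c_N\to c\in(0,\infty) $ (applied with $ c_N=(N_i/N)^{1+s/d} $), one gets
\[
    \liminf_{N}\frac{E_s(\omega_N)}{N^{1+s/d}}
    \;\geq\;\sum_{i=1,2}\liminf_{N}\frac{E_s(\omega_N^{(i)})}{N^{1+s/d}}
    \;=\;\sum_{i=1,2}\bigl(\beta^{(i)}\bigr)^{1+s/d}\,\liminf_{N}\frac{E_s(\omega_N^{(i)})}{N_i^{\,1+s/d}},
\]
which is precisely the assertion. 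I do not expect a genuine obstacle here: the only points requiring care are the degenerate case $ \beta^{(i)}=0 $ with the attendant conventions, and the fact that $ \liminf $ is merely superadditive — fortunately exactly the direction the statement requires. Note that $ s>d $ plays no role in this one-sided estimate; it would be needed only to upgrade the first display to an equality via the $ O(N^2) $ bound on the cross-interaction.
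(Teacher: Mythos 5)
Your proof is correct and follows essentially the same route as the paper's: split the energy along the two metrically separated pieces, handle the cross-interactions, and pass to the liminf using the convergence of $\#(\omega_N\cap A^{(i)})/N$. The only (harmless) difference is that you discard the cross terms by positivity, whereas the paper bounds them by $\sigma^{-s}N^2=o(N^{1+s/d})$; your version suffices for this one-sided inequality, and your explicit treatment of the degenerate case $\beta^{(i)}=0$ is a point the paper leaves implicit.
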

\begin{proof}
    We observe that with $ \sigma= \dist( A^{(1)}, A^{(2)} ) $,
    \[
    \left|{E_s(\omega_N)} -  \left({E_s(\omega_N \cap A^{(1)})} +  {E_s(\omega_N \cap A^{(2)})}\right) \right| =
    \sum_{\substack{\bs x_i \in A^{(1)}\\, \bs x_j\in A^{(2)}}}  |\bs x_i - \bs x_j|^{-s}
    \leq \sigma^{-s} N^2,
    \]
    and use the definition of $ \beta^{(i)}, \, i=1,2$, to obtain the desired equality.
\end{proof}

This is particularly useful for self-similar fractals satisfying the open set property. Consider such a  fractal $ A $; since $ \psi_m(V), \, 1\leq m \leq M, $ are pairwise disjoint for an open set $ V $ containing $ A $, there exists a $ \sigma >0 $ such that $ \dist(\psi_i(A),\psi_j(A)) \geq\sigma $ for $ i\neq j $. Following \cite{MR867284}, we will write 
\[
    A_{m_1\ldots m_l} := \psi_{m_1}\circ\ldots\circ \psi_{m_l}(A), \qquad 1\leq m_i\leq M, \quad l\geq 1.
\]
Then $ \dist(A_{m_1\ldots m_l},\, A_{m'_1\ldots m'_l}) \geq r_{m_1}\ldots r_{m_k}\sigma $, where $ k= \min \{i: m_i\neq m_i' \} $, so for a fixed $ M $ in the expression
\begin{equation*}
    A = \bigcup_{m_1,\ldots, m_l=1}^M A_{m_1\ldots m_l}
\end{equation*}
not only the union is disjoint, but also the sets $A_{m_1\ldots m_l}$ are metrically separated. The following lemma is technical, and we give its proof for the convenience of the reader.
\begin{lemma}
    \label{lem:splitting}
    If $ \{\mu_N : N \in \mathfrak N\} $ is a sequence of probability measures on the set $ A $, which for every $ l\geq 1 $ satisfies
    \[
        \lim_{\mathfrak N \ni N \to \infty} \mu_N(A_{m_1\ldots m_l}) = \mu(A_{m_1\ldots m_l}), \qquad 1\leq m_1,\ldots, m_l \leq M,
    \]
    for another probability measure $ \mu $ on $ A $, then 
    \[
        \mu_N \weakto \mu, \qquad \mathfrak N \ni N \to \infty.
    \] 
\end{lemma}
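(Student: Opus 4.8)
The plan is to check the defining property of weak$^*$ convergence directly: it suffices to show that $\int_A f\,d\mu_N \to \int_A f\,d\mu$ for an arbitrary $f\in C(A)$. Since $A$ is compact, $f$ is uniformly continuous, so given $\epsilon>0$ there is $\delta>0$ with $|f(\bs x)-f(\bs y)|<\epsilon$ whenever $\bs x,\bs y\in A$ and $|\bs x-\bs y|<\delta$. The idea is then to approximate $f$ uniformly by a step function that is constant on the cylinder sets $A_{m_1\ldots m_l}$ with $l$ large, and to use the hypothesis to pass to the limit in the resulting finite sum.

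To implement this, first note that with $r_* := \max_{1\le m\le M} r_m < 1$ one has $\diam A_{m_1\ldots m_l} \le r_{m_1}\cdots r_{m_l}\,\diam A \le r_*^{\,l}\,\diam A$, so we may fix $l$ with $\diam A_{m_1\ldots m_l} < \delta$ for every multi-index of length $l$; recall that for such $l$ the sets $\{A_{m_1\ldots m_l}\}$ form a finite disjoint partition of $A$. Choosing a representative $\bs y_{m_1\ldots m_l}\in A_{m_1\ldots m_l}$ in each cell, set the Borel simple function
\[
    g := \sum_{m_1,\ldots,m_l=1}^{M} f(\bs y_{m_1\ldots m_l})\,\mathbf 1_{A_{m_1\ldots m_l}}.
\]
By the choice of $l$, $\sup_A|f-g|\le \epsilon$, hence $\bigl|\int_A f\,d\mu_N-\int_A g\,d\mu_N\bigr|\le\epsilon$ and $\bigl|\int_A f\,d\mu-\int_A g\,d\mu\bigr|\le\epsilon$, uniformly in $N$, since all the $\mu_N$ and $\mu$ are probability measures. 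On the other hand, because the sum is finite and the hypothesis gives $\mu_N(A_{m_1\ldots m_l})\to\mu(A_{m_1\ldots m_l})$ for each fixed multi-index,
\[
    \int_A g\,d\mu_N = \sum_{m_1,\ldots,m_l} f(\bs y_{m_1\ldots m_l})\,\mu_N(A_{m_1\ldots m_l}) \;\longrightarrow\; \sum_{m_1,\ldots,m_l} f(\bs y_{m_1\ldots m_l})\,\mu(A_{m_1\ldots m_l}) = \int_A g\,d\mu,
\]
as $\mathfrak N\ni N\to\infty$. Combining the three estimates via the triangle inequality yields $\limsup_{\mathfrak N\ni N\to\infty}\bigl|\int_A f\,d\mu_N-\int_A f\,d\mu\bigr|\le 2\epsilon$, and letting $\epsilon\downarrow 0$ gives the claim.

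There is essentially no serious obstacle here: the lemma is a soft approximation statement. The only point requiring (routine) care is that the cell diameters $\diam A_{m_1\ldots m_l}$ tend to $0$ uniformly over multi-indices of length $l$ as $l\to\infty$, which is immediate from $r_*<1$; together with the fact that $\{A_{m_1\ldots m_l}\}$ is an exact partition of $A$ (so that $\sum_{m_1,\ldots,m_l}\mathbf 1_{A_{m_1\ldots m_l}}=\mathbf 1_A$), this reduces the problem to the standard approximation of a continuous function on a compact space by step functions subordinate to a fine finite partition, followed by taking a limit inside a finite sum.
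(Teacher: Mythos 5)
Your proof is correct and follows essentially the same route as the paper's: both arguments exploit uniform continuity of $f$ on the compact set $A$, the uniform decay $\diam A_{m_1\ldots m_l}\leq (\max_m r_m)^l\diam A$, and the disjoint partition of $A$ into cylinder sets to reduce the problem to convergence of a finite sum of cell measures. The only cosmetic difference is that the paper approximates $f$ on each cell by its minimum value and tracks explicit $\epsilon/M^l$ bounds, whereas you use an arbitrary representative value and pass to the limit in the finite sum directly.
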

\begin{proof}
    Fix an $ f\in C(A) $; since $ A $ is compact, $ f $ is uniformly continuous on $ A $. For a fixed $\epsilon>0$, there exists an $ L_0\in \mathbb N $ such that $ |f(x) - f(y)| < \epsilon $ whenever $ x,y\in A_{m_1,\ldots,m_l} $ for any $ l \geq L_0 $ and any set of indices $ 0\leq m_1,\ldots,m_l\leq M $; this is possible due to 
    \[
        \diam(A_{m_1,\ldots,m_l}) \leq r_{m_1}\ldots r_{m_l} \diam(A) \leq \left(\max_{1\leq m \leq M} r_m\right)^l \diam(A).
    \]
    Fix an $ l\geq L_0 $ until the end of this proof, then pick an $ N_0 \in \mathfrak N $ so that for every $ N\geq N_0 $, there holds
    \[
        |\mu_N(A_{m_1\ldots m_l}) - \mu(A_{m_1\ldots m_l}) | < \epsilon/M^l, \qquad 1\leq m_1,\ldots,m_l \leq M.
    \]
    Finally, let us write $ f_{m_1\ldots m_l} := \min_{A_{m_1\ldots m_l}} f(x) $ for brevity. Then for $ N\geq N_0 $,
    \[
        \begin{aligned}
            \left|\int_A f(x) d\mu_N(x)\right. &-\left.\int_A f(x) d\mu(x)\right|\\
                                               &\leq \sum_{m_1,\ldots,m_l=1}^M \left|\int_{A_m} (f(x) - f_{m_1\ldots m_l}) d\mu_N(x)\right. -\left.\int_{A_m} (f(x) - f_{m_1\ldots m_l}) d\mu(x) \right|\\
                                               & + \sum_{m_1,\ldots,m_l=1}^M \left|(\mu_N(A_m) - \mu(A_m))f_{m_1\ldots m_l}\right|\\
                                               &\leq 2\epsilon + \epsilon \|f\|_\infty,
        \end{aligned}
    \]
    where the estimate for the first sum used that both $ \mu_N $ and $ \mu $ are probability measures. This proves the desired statement.
\end{proof}
Note that the converse is also true: since the sets $ A_{m_1,\ldots,m_l} $ are metrically separated, convergence $ \mu_N \weakto \mu $ of measures supported on $ A $ immediately implies (by Urysohn's lemma) $ \mu_N(A_{m_1\ldots m_l}) \to \mu(A_{m_1\ldots m_l}) $ for all $ l\geq 1 $ and all indices $ 1\leq m_1,\ldots,m_l\leq M $. 

The proof of the following statement follows a well-known approach \cite[Theorem 2]{Hardin2012, MR1458327}, and can be considered standard.

\begin{proposition}
    \label{lem:lowerupper}
    If $ A $ is a compact $ d $-regular set, then $ 0< \g(A) \leq \G(A) < \infty $.
\end{proposition}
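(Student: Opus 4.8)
The plan is to establish two-sided bounds of the form $c_1 N^{1+s/d} \leq \e_s(A,N) \leq c_2 N^{1+s/d}$ with constants depending only on $A$, $s$, $d$; dividing by $N^{1+s/d}$ and taking $\liminf$ / $\limsup$ then yields $0 < \g(A) \leq \G(A) < \infty$ immediately (the middle inequality $\g(A) \leq \G(A)$ being trivial from $\liminf \leq \limsup$). Throughout I would use only the $d$-regularity estimate \eqref{eq:dregular} and compactness of $A$.

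For the \emph{upper bound} on $\e_s(A,N)$, I would first produce a well-separated configuration. Using the lower bound in \eqref{eq:dregular}, a standard greedy/maximal-packing argument shows that for each $N$ there is an $N$-point set $\omega_N = \{\bs x_1,\dots,\bs x_N\}\subset A$ with pairwise distances $|\bs x_i - \bs x_j| \geq \eta\, N^{-1/d}$ for some constant $\eta = \eta(A,d) > 0$: indeed, if one cannot fit $N$ points that are $\eta N^{-1/d}$-separated, then $A$ is covered by $N-1$ balls of radius $\eta N^{-1/d}$, and summing \eqref{eq:dregular} over these balls forces $\h_d(A) \leq (N-1) c (\eta N^{-1/d})^d < c\,\eta^d \h_d(A)/\h_d(A)$... more precisely one gets a contradiction with $0 < \h_d(A) < \infty$ once $\eta$ is chosen small. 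Given such an $\omega_N$, one estimates $E_s(\omega_N)$ by a shell decomposition: for fixed $i$, the number of $j$ with $|\bs x_i - \bs x_j|$ in the annulus $[k\eta N^{-1/d}, (k+1)\eta N^{-1/d})$ is $O(k^{d-1})$ (again by \eqref{eq:dregular}, since those points have disjoint balls of radius $\tfrac12\eta N^{-1/d}$ packed into a ball of radius $\approx (k+1)\eta N^{-1/d}$ around $\bs x_i$), so
\[
    \sum_{j\neq i} |\bs x_i - \bs x_j|^{-s} \leq C \sum_{k=1}^{\infty} k^{d-1} (k\eta N^{-1/d})^{-s} = C' N^{s/d},
\]
where the series converges precisely because $s > d$. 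Summing over $i$ gives $\e_s(A,N) \leq E_s(\omega_N) \leq c_2 N^{1+s/d}$, hence $\G(A) < \infty$.

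For the \emph{lower bound}, let $\tilde\omega_N = \{\mx_1,\dots,\mx_N\}$ be optimal. For each point $\mx_i$, let $\rho_i := \min_{j\neq i}|\mx_i - \mx_j|$ be its nearest-neighbor distance; then by the same shell argument as above (using only the upper bound in \eqref{eq:dregular} to count points in annuli around $\mx_i$, whose half-$\rho_i$-balls may overlap but contribute a bounded multiplicity), one gets $\sum_{j\neq i}|\mx_i - \mx_j|^{-s} \geq c\, \rho_i^{-s}$, so $E_s(\tilde\omega_N) \geq c\sum_i \rho_i^{-s}$. On the other hand, the balls $B(\mx_i, \rho_i/2)$ are pairwise disjoint, so by the lower bound in \eqref{eq:dregular},
\[
    0 < \h_d(A) = \sum_i \h_d\big(A\cap B(\mx_i,\rho_i/2)\big) \, \geq\, c^{-1} 2^{-d}\sum_i \rho_i^{d}
\]
is not quite what I want; rather $\sum_i \rho_i^d \leq C$ for a constant $C = C(A,d)$ (here one also needs that each $\rho_i \leq \diam A$, which holds for $N\geq 2$, so \eqref{eq:dregular} applies at radius $\rho_i/2$). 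Now apply the power-mean / Hölder inequality to the $N$ numbers $\rho_i^{-s}$ and $\rho_i^d$: since $s/d > 0$,
\[
    \sum_{i=1}^N \rho_i^{-s} \;\geq\; N^{1+s/d}\Big(\sum_{i=1}^N \rho_i^{d}\Big)^{-s/d} \;\geq\; N^{1+s/d} C^{-s/d},
\]
which gives $\e_s(A,N) = E_s(\tilde\omega_N) \geq c_1 N^{1+s/d}$ and hence $\g(A) > 0$.

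The main obstacle — and the only place requiring genuine care rather than bookkeeping — is extracting the nearest-neighbor lower bound $\sum_{j\neq i}|\mx_i - \mx_j|^{-s}\geq c\rho_i^{-s}$ with a constant uniform in $i$ and $N$, together with the companion annulus-counting upper bounds, all from $d$-regularity alone; the potential pitfall is a point $\mx_i$ near the "boundary" of $A$ where fewer neighbors accumulate, but since \eqref{eq:dregular} is two-sided and centered at \emph{every} $\bs x\in A$, the counting estimates go through uniformly. Everything else (the packing construction, the shell sums, the Hölder step) is routine and the convergence of $\sum_k k^{d-1-s}$ is exactly where the hypothesis implicit in the surrounding text, $s > d$, enters. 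I would remark that the same proof is the one referenced as standard in \cite{Hardin2012, MR1458327}.
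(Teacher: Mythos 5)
The paper does not actually spell out a proof of this proposition --- it only remarks that the argument is standard and points to \cite{Hardin2012, MR1458327} --- and your plan is exactly that standard argument: a maximal-packing construction plus shell summation for the upper bound, and nearest-neighbor distances, disjoint balls, and Jensen/power-mean for the lower bound. The lower-bound half of your write-up is correct and complete (note that $\sum_{j\neq i}|\mx_i-\mx_j|^{-s}\geq \rho_i^{-s}$ is immediate from the single nearest-neighbor term, so the step you single out as ``the main obstacle'' requires no shell argument at all and no constant $c$); the middle inequality $\g(A)\leq\G(A)$ is of course trivial.

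There is, however, one concrete flaw in the upper bound: the claim that the number of points of a $\delta$-separated set ($\delta=\eta N^{-1/d}$) lying in the thin annulus $\{k\delta\leq|\bs x_i-\bs x_j|<(k+1)\delta\}$ is $O(k^{d-1})$. The packing argument you invoke (disjoint balls of radius $\delta/2$ inside a ball of radius $\approx(k+1)\delta$) only yields $O(k^{d})$, and for a general $d$-regular set one cannot do better: the two constants $c^{-1}$ and $c$ in \eqref{eq:dregular} differ, so the $\h_d$-mass of $A$ in a single thin annulus can be a fixed fraction of the mass of the whole ball (think of a fractal whose mass accumulates near one distance scale). With the honest count $O(k^d)$ your series becomes $\sum_k k^{d-s}$, which converges only for $s>d+1$, leaving a gap for $d<s\leq d+1$. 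The standard repair is to sum over dyadic shells instead: the number of points with $2^m\delta\leq|\bs x_i-\bs x_j|<2^{m+1}\delta$ is $O(2^{md})$ by the same packing argument, so
\[
\sum_{j\neq i}|\bs x_i-\bs x_j|^{-s}\leq C\sum_{m\geq 0}2^{md}\left(2^{m}\delta\right)^{-s}=C\delta^{-s}\sum_{m\geq 0}2^{m(d-s)}\leq C'\,N^{s/d},
\]
which converges precisely for $s>d$. (Equivalently, one can keep unit-width annuli and sum by parts against the cumulative count $O(k^d)$.) A second, cosmetic point: the covering step that guarantees a $\delta$-separated set of cardinality $N$ uses the \emph{upper} inequality $\h_d(A\cap B(\bs x,r))\leq cr^d$, not the lower one as you state, while your lower energy bound uses only the lower inequality --- consistent with the paper's remark that each bound needs only one half of \eqref{eq:dregular}.
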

The above proposition can be somewhat strengthened, to obtain uniform upper and lower bounds on 
\[
    \frac{\e_s(\omega_N)}{N^{1+s/d}}, \qquad N\geq 2;
\]
furthermore, each bound requires only one of the inequalities in \eqref{eq:dregular}.  In addition, for any  sequence of configurations $ \omega_N, \, N \in \mathfrak N, $ with
\[
    \lim_{\mathfrak N\ni N \to \infty} \frac{E_s(\omega_N)}{N^{1+s/d}} < \infty, 
\]
every weak$ ^*  $ cluster point of $ \nu_N, \, N\in \mathfrak N $, must be  absolutely continuous with respect to $ \h_d $ on $ A $. Lastly, we will need the following standard estimate.
\begin{corollary}
    \label{cor:point_energy}
    Suppose $ A $ is a compact $ d $-regular set,  $ \omega_N = \{\bs x_i : 1\leq i \leq N \} \subset A $, and $ s >d $. Then the minimal point energy of $ \omega_N $ is bounded by:
    \[
        \min_{\bs x \in A}\sum_{j=1}^N |\bs x - \bs x_j|^{-s} \leq CN^{s/d},
    \]
    where $ C  $ depends only on $ A,s,d $.
\end{corollary}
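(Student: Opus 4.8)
The plan is to bound the minimal point energy by \emph{averaging} the function $\bs x\mapsto\sum_{j=1}^N|\bs x-\bs x_j|^{-s}$ over a large portion of $A$ against the normalized Hausdorff measure $h_d$ from \eqref{eq:defhd}. One cannot average over all of $A$, since $s>d$ makes $\int_A|\bs x-\bs x_j|^{-s}\,dh_d(\bs x)=\infty$; so the first step is to excise a small ball around each point of $\omega_N$. Fix $\rho:=c_0N^{-1/d}$ with a constant $c_0=c_0(A,s,d)>0$ to be pinned down, set $E_j:=B(\bs x_j,\rho)$ and $G:=A\setminus\bigcup_{j=1}^N E_j$. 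Taking $c_0\le 2^{1/d}\diam A$ guarantees $\rho\le\diam A$ for all $N\ge 2$, so the upper bound in \eqref{eq:dregular} applies and yields $h_d(E_j)\le c\rho^d/\h_d(A)=cc_0^d/(\h_d(A)N)$; summing over $j$, the total excised mass is at most $cc_0^d/\h_d(A)$, which is $\le\tfrac12$ once $c_0\le(\h_d(A)/2c)^{1/d}$. Hence $h_d(G)\ge\tfrac12$, and in particular $G\ne\varnothing$.

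Next I would estimate the average of the point energy over $G$. Since $G\subset A\setminus E_j$ for every $j$,
\[
    \int_G\sum_{j=1}^N|\bs x-\bs x_j|^{-s}\,dh_d(\bs x)\le\sum_{j=1}^N\int_{A\setminus E_j}|\bs x-\bs x_j|^{-s}\,dh_d(\bs x),
\]
and for a single $j$ the distribution-function (layer-cake) identity gives
\[
    \int_{A\setminus E_j}|\bs x-\bs x_j|^{-s}\,dh_d(\bs x)=\int_0^{\rho^{-s}}h_d\bigl(\{\bs x\in A\colon\rho\le|\bs x-\bs x_j|<t^{-1/s}\}\bigr)\,dt\le\int_0^{\rho^{-s}}\min\Bigl\{1,\ \tfrac{c}{\h_d(A)}\,t^{-d/s}\Bigr\}\,dt,
\]
the last inequality using the upper bound in \eqref{eq:dregular} once more (for radii $\le\diam A$). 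Because $s>d$ we have $d/s<1$, so $\int_0^{\rho^{-s}}t^{-d/s}\,dt=\tfrac{s}{s-d}\rho^{d-s}<\infty$, and so $\int_{A\setminus E_j}|\bs x-\bs x_j|^{-s}\,dh_d\le(\diam A)^{-s}+\tfrac{cs}{(s-d)\h_d(A)}\rho^{d-s}$.

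Summing over $j=1,\dots,N$ and substituting $\rho=c_0N^{-1/d}$ gives $N\rho^{d-s}=c_0^{d-s}N^{s/d}$, while $N(\diam A)^{-s}\le(\diam A)^{-s}N^{s/d}$ because $s>d$ forces $N\le N^{s/d}$. Thus $\int_G\sum_{j}|\bs x-\bs x_j|^{-s}\,dh_d\le C_1N^{s/d}$ for a constant $C_1$ depending only on $A,s,d$, and since $h_d(G)\ge\tfrac12$,
\[
    \min_{\bs x\in A}\sum_{j=1}^N|\bs x-\bs x_j|^{-s}\le\min_{\bs x\in G}\sum_{j=1}^N|\bs x-\bs x_j|^{-s}\le\frac1{h_d(G)}\int_G\sum_{j=1}^N|\bs x-\bs x_j|^{-s}\,dh_d\le 2C_1N^{s/d},
\]
which is the assertion with $C=2C_1$.

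I do not expect a real obstacle here: this is the standard averaging argument, and the hypothesis $s>d$ enters in precisely two places — to make the excised singular integral $\int_0^{\rho^{-s}}t^{-d/s}\,dt$ finite, and to absorb the linear term $N$ into $N^{s/d}$. The only thing needing a bit of care is the bookkeeping of constants: checking that a single $c_0$ depending only on $c$, $\h_d(A)$ and $\diam A$ can be chosen so that both $\rho\le\diam A$ (for all $N\ge 2$) and $h_d(G)\ge\tfrac12$ hold. (Should one prefer to sidestep even this, the finitely many small $N$ are harmless, since $\min_{\bs x\in A}\sum_j|\bs x-\bs x_j|^{-s}<\infty$ — pick any $\bs x\in A\setminus\omega_N$, which is nonempty as $\h_d(A)>0$ — so the constant may simply be enlarged.)
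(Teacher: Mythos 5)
Your argument is correct, and it is precisely the standard averaging (layer-cake) proof that the paper alludes to when it calls this a ``standard estimate'' and omits the details: excise balls of radius $\asymp N^{-1/d}$ around the $\bs x_j$, use only the upper inequality in \eqref{eq:dregular} to show the remaining set $G$ carries at least half the mass of $h_d$ and to bound $\int_{A\setminus B(\bs x_j,\rho)}|\bs x-\bs x_j|^{-s}\,dh_d\lesssim\rho^{d-s}$, then bound the minimum by the average over $G$. The constant bookkeeping (the choice of $c_0$, the $(\diam A)^{-s}$ term for radii exceeding $\diam A$, and absorbing the finitely many small $N$) is handled correctly, so nothing is missing.
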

\begin{proof}[\textbf{Proof of \autoref{thm:weakstar}.}]
    In view of the weak$ ^* $ compactness of probability measures in $ A $, to establish existence of the weak$ ^* $ limit of $ \bbline\nu_N, \, N\in \n $, it suffices to show that any cluster point of $\bbline\nu_N, \ N\in \n, $ in the weak$ ^* $ topology is $h_d$ which is defined in \eqref{eq:defhd} (see \cite[Proposition A.2.7]{MR1070713}). To that end, consider a subsequence of $ \n $ for which the empirical measures $ \bbline \nu_N $ converge to a cluster point $ \mu $; for simplicity we shall use the same notation $ \n $ for this subsequence. 

    As discussed above,  $ \bbline \nu_N (A_{m_1\ldots m_l}) \to  \mu(A_{m_1\ldots m_l}),\, \n\ni N\to \infty $; this ensures that the quantities 
    \[
        \bbeta_m : = \mu(A_m) = \lim_{\n\ni N\to\infty} \bbline\nu_N(A_m) = \lim_{\n\ni N\to\infty} \frac{\#(\bbline\omega_N\cap A_m)}{N}, \qquad m=1,\ldots, M,
    \]
    are well-defined. From \eqref{eq:lower}, separation of $ \{A_m\} $, and Lemma \ref{lem:locality} follows
    \[
        \begin{aligned}
            \g(A) &= \sum_{m=1}^M \lim_{\n\ni N\to\infty} \frac{E_s(\bbline\omega_N \cap A_m)}{N^{1+s/d}} \geq \sum_{m=1}^M \bbeta_m^{1+s/d} \liminf_{\n\ni N\to\infty} \frac{E_s(\bbline\omega_N \cap A_m)}{\#(\bbline\omega_N\cap A_m)^{1+s/d}}\\
                  &\geq \sum_{m=1}^M \bbeta_m^{1+s/d} r_m^{-s} \g(A).
        \end{aligned}
    \]
    Consider the RHS in the last inequality. As a function of $ \{\bbeta_m\} $, it satisfies the constraint $ \sum_m \bbeta_m =1 $; note also that by the defining property \eqref{eq:dim} of $ d $, there holds $ \sum_m R_m =1 $  with $ R_m:= r_m^d,$  $ 1\leq m \leq M $. We have 
    \begin{equation}
        \label{eq:optimize}
        \begin{aligned}
            \g(A) &\geq \inf \left\{\sum_{m=1}^M \beta_m^{1+s/d} R_m^{-s/d}\colon  \sum_{m=1}^M \beta_m = 1 \right\} \g(A).  \\
        \end{aligned}
    \end{equation}
    Level sets of the function $ \sum_m \beta_m^{1+s/d} R_m^{-s/d} $ are convex, so the infimum is attained and unique; it is easy to check that the solution is at  $ \beta_m = R_m = r_m^d, \, 1\leq m \leq M, $ and the minimal value is $ 1 $. Indeed, the corresponding Lagrangian is
    \[
        L(\beta_1,\ldots,\beta_M, \lambda) := \sum_{m=1}^M \beta_m^{1+s/d} R_m^{-s/d} - \lambda\sum_{m=1}^M \beta_m,
    \]
    hence
    \[
        \nabla L_{\beta_m} = (1+s/d)\left(\frac{\beta_m}{R_m}\right)^{s/d} - \lambda, \quad 1\leq m \leq M,
    \]
    and it remains to use $ \beta_m \geq 0,\, 1\leq m \leq M $,  and $ \sum_m R_m =1 $, to conclude $ \beta_m = R_m, \, 1\leq m \leq M $.

    Since $ 0< \g(A) < \infty $ by Lemma \ref{lem:lowerupper}, from \eqref{eq:optimize} it follows
    \[
        \bbeta_m = r_m^d, \qquad m=1,\ldots, M.  
    \]
    Note that this argument shows also 
    \[
        \lim_{\n\ni N\to\infty} \frac{E_s(\bbline\omega_N \cap A_m)}{\left(\#(\bbline\omega_N\cap A_m)\right)^{1+s/d}} = \g(A),
    \]
    so the above can be repeated recursively for sets $ A_{m_1\ldots m_l} $. Namely, for every $ l\geq 1 $ and $ 1\leq m, m_1,\ldots, m_l \leq M $,
    \[
        \mu(A_{m\mkern1mu m_1\ldots m_l}) =: \bbeta_{m\mkern1mu m_1\ldots m_l} = r_m^d\mkern1mu\bbeta_{m_1\ldots m_l}.
    \]
    Observe further that $ h_d $ satisfies
    \[
        h_d(A_{m\mkern1mu m_1\ldots m_l}) = r_m^d h_d (A_{m_1\ldots m_l})
    \]
    by definition, so by Lemma \ref{lem:splitting} follows that every weak$ ^* $ cluster point of $ \bbline\nu_N,\, N\in\n,$ is $ h_d $, as desired. 
\end{proof} 
\begin{proof}[\textbf{Proof of Theorem~\ref{thm:subseq}.}]
    Note that setting equal contraction ratios $ r_1 = \ldots = r_m = r $ in \eqref{eq:dim} gives $ r^{-s} = M^{s/d} $.
    Consider the set function 
    \[
        \psi: \bs x \mapsto \bigcup_{m=1}^M \psi_m(\bs x), \quad \bs x \in  A,
    \]
    and denote
    \[
        \psi(\omega_N) := \bigcup_{\bs x\in \omega_N} \psi(x).
    \]
    It follows from the open set condition that the union above is metrically separated; as before, we denote the separation distance by $ \sigma $. Observe that the definition of a similitude implies $ \# (\psi(\omega_N)) = M\#(\omega_N) $. We then have for any configuration $ \omega_N,\, N \geq 2, $
    \[
        \begin{aligned}
            \e_s(A,MN) &\leq E_s(\psi(\omega_N)) \leq M r^{-s} E_s(\omega_N) + \sigma^{-s} N^2 M^2\\
                               & = M^{1+s/d} E_s(\omega_N) + \sigma^{-s} N^2 M^2,
    \end{aligned}
    \]
    and repeated application of the second inequality yields
    \[
        \begin{aligned}
            \e_s(A,M^kN) &\leq E_s[ \psi(\psi^{(k-1)}(\omega_N)) ] \leq M^{1+s/d} E_s(\psi^{(k-1)}(\omega_N)) + \sigma^{-s} (M^{k-1}N)^2 M^2 \\
                                 & \leq (M^2)^{1+s/d} E_s(\psi^{(k-2)}(\omega_N)) + M^{1+s/d}\sigma^{-s} (M^{k-2}N)^2 M^2 + \sigma^{-s} (M^{k-1}N)^2 M^2\\
                                 & \leq \ldots \\
                                 & \leq (M^k)^{1+s/d} E_s(\omega_N) + \sigma^{-s}N^2 \sum_{l=1}^k (M^{l-1})^{1+s/d} (M^{k-l})^2 M^2.
    \end{aligned}
    \]
    Estimating the geometric series in the last inequality, we obtain
    \begin{equation}
        \label{eq:MNbound} 
        \begin{aligned}
            \e_s(A,M^kN) & \leq (M^k)^{1+s/d} E_s(\omega_N) + \sigma^{-s}N^2 M^{2k+1 - s/d} \sum_{l=1}^k M^{l(s/d-1)}  \\
                         & \leq (M^k)^{1+s/d} E_s(\omega_N) + \sigma^{-s}N^2 M^{2k + 1 - s/d} \frac{M^{(k+1)(s/d-1)}-1}{M^{s/d-1} -1}\\
                         & \leq (M^k)^{1+s/d} E_s(\omega_N) + \frac {N^{1-s/d}}{\sigma^{s}(M^{s/d-1}-1)}  \left(M^{k} N\right)^{(1+s/d)}.
        \end{aligned}
    \end{equation}
    Let now $ \epsilon >0 $ fixed; find $ \omega_{N_0} $ such that $ N_0 \in \mathfrak M $ and 
    \[
        \frac{E_s(\omega_{N_0})}{N_0^{1+s/d}} \leq \liminf_{\mathfrak M \ni N \to \infty} \frac{\e_s(A,N)}{N^{1+s/d}} + \epsilon,
    \]
    and in addition,  $ N_0^{1-s/d} < \epsilon\sigma^s (M^{s/d-1}-1) $.
    Then by \eqref{eq:MNbound} we have
    \[
        \frac{\e_s(A, N)}{N^{1+s/d}} \leq  \frac{E_s(\omega_{N_0})}{N_0^{1+s/d}} + \epsilon \leq \liminf_{\mathfrak M \ni N \to \infty} \frac{\e_s(A,N)}{N^{1+s/d}} + 2\epsilon, \qquad \mathfrak M\ni N \geq N_0.
    \]
    This proves the desired statement.
\end{proof} 

In the following lemma we write $ \mathfrak N(k), k \in \mathbb N, $ to denote the $ k $-th element of the sequence $ \mathfrak N \subset \mathbb N $; we say that $ \mathfrak N $ is \textit{majorized} by a sequence $ \mathfrak M $, if the inequality $ \mathfrak N(k) < \mathfrak M(k) $ holds for every $ k\geq 1 $.

\begin{lemma}
    If  $ \mathfrak M \subset \mathbb N $ is a sequence such that the limit
    \[
        \lim_{\mathfrak M \ni N \to \infty} \frac{\e_s(A, N)}{N^{1+s/d}}
    \]
    exists, then for any sequence of integers $ \mathfrak N \subset \mathbb Z $ with $ |\mathfrak N(k)| $ majorized by $ \mathfrak M $ and satisfying $ |\mathfrak N(k)| = o(\mathfrak M(k)),\, k\to \infty, $ there holds
    \begin{equation}
        \label{eq:perturbation}
        \lim_{(\mathfrak M + \mathfrak N) \ni N \to \infty} \frac{\e_s(A, N)}{N^{1+s/d}} 
        = \lim_{\mathfrak M \ni N \to \infty} \frac{\e_s(A, N)}{N^{1+s/d}}, 
    \end{equation}
    where the addition $ \mathfrak M + \mathfrak N $ is performed elementwise.
\end{lemma}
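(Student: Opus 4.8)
The plan is to first establish a normalized continuity estimate for the function $N\mapsto \e_s(A,N)$, and then transport the given limit from $\mathfrak M$ to $\mathfrak M+\mathfrak N$ using $|\mathfrak N(k)| = o(\mathfrak M(k))$. Write $L$ for the limit $\lim_{\mathfrak M\ni N\to\infty}\e_s(A,N)/N^{1+s/d}$ and $C$ for the constant from Corollary~\ref{cor:point_energy}. The claim of Step~1 is that for all integers $2\le N\le M$,
\[
0 \;\le\; \e_s(A,M) - \e_s(A,N) \;\le\; 2C\,(M-N)\,M^{s/d}.
\]
The left inequality is monotonicity: deleting any point from an optimal $M$-point configuration produces an $(M-1)$-point configuration of no larger energy, so $\e_s(A,M-1)\le\e_s(A,M)$, and one iterates. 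For the right inequality, one starts from an optimal $N$-point configuration and adjoins points one at a time, each time placing the new point at a minimizer of the point potential of the current configuration; by Corollary~\ref{cor:point_energy} each insertion, when passing from $k$ to $k+1\le M$ points, increases $E_s$ by at most $2C k^{s/d}\le 2C M^{s/d}$, the factor $2$ accounting for $E_s$ being a sum over ordered pairs. Summing the $M-N$ insertions gives the bound.

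For Step~2 (transport), set $M_k := \mathfrak M(k)$ and $N_k := \mathfrak M(k)+\mathfrak N(k)$. Majorization of $|\mathfrak N(k)|$ by $\mathfrak M$ gives $N_k\ge 1$, while $|\mathfrak N(k)| = o(M_k)$ gives $N_k/M_k = 1+\mathfrak N(k)/M_k\to 1$; hence $N_k\to\infty$ (so $N_k\ge 2$ for large $k$) and $\max(M_k,N_k)/N_k\to 1$. Applying the estimate of Step~1 to the pair $(M_k,N_k)$, dividing by $N_k^{1+s/d}$, and using $|M_k-N_k| = |\mathfrak N(k)|$, one obtains
\[
\left|\frac{\e_s(A,N_k)}{N_k^{1+s/d}} - \left(\frac{M_k}{N_k}\right)^{1+s/d}\frac{\e_s(A,M_k)}{M_k^{1+s/d}}\right|
\;\le\; 2C\,\frac{|\mathfrak N(k)|}{N_k}\left(\frac{\max(M_k,N_k)}{N_k}\right)^{s/d},
\]
and the right-hand side tends to $0$ since $|\mathfrak N(k)|/N_k = (|\mathfrak N(k)|/M_k)(M_k/N_k)\to 0$. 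As $(M_k/N_k)^{1+s/d}\to 1$ and $\e_s(A,M_k)/M_k^{1+s/d}\to L$, it follows that $\e_s(A,N_k)/N_k^{1+s/d}\to L$, which is exactly \eqref{eq:perturbation}.

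The only place requiring real care is the greedy construction in Step~1: one must invoke Corollary~\ref{cor:point_energy} anew at each insertion and keep the ordered-pair factor of $2$ straight. Everything after that is the elementary consequence of $|\mathfrak N(k)| = o(\mathfrak M(k))$ — that $N_k/M_k\to 1$, $N_k\to\infty$, and that the error term $|\mathfrak N(k)|\,\max(M_k,N_k)^{s/d}/N_k^{1+s/d}$ vanishes — so there is no substantial obstacle.
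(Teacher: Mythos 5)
Your proof is correct and rests on exactly the same two ingredients as the paper's: monotonicity of $N\mapsto\e_s(A,N)$ in one direction and the point-energy bound of Corollary~\ref{cor:point_energy} (applied $|\mathfrak N(k)|$ times) in the other, combined with $|\mathfrak N(k)|=o(\mathfrak M(k))$ to kill the error term and the ratio correction. The only difference is organizational: you package the two bounds into a single two-sided estimate $0\le\e_s(A,M)-\e_s(A,N)\le 2C(M-N)M^{s/d}$ and treat positive and negative perturbations uniformly, whereas the paper passes to subsequences and handles $\mathfrak M+\mathfrak N$ and $\mathfrak M-\mathfrak N$ separately.
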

\begin{proof}[\textbf{Proof.}] First, observe that by passing to subsequences of $ \mathfrak M $ and $ \mathfrak N $, it suffices to assume $ \mathfrak N(k) \geq 0 $ and to show\eqref{eq:perturbation} for $ \mathfrak M + \mathfrak N $ and $ \mathfrak M - \mathfrak N $.
    If $\mathfrak N (k) \geq 0$, we have by the definition of $\e_s$, 
    \[
        \e_s [A, (\mathfrak M + \mathfrak N)(k)] \geq \e_s (A, \mathfrak M (k))
    \]
     Thus
    \begin{equation} 
        \label{eq:tag1-1}
                \begin{aligned}
            &\liminf_{(\mathfrak M + \mathfrak N) \ni N \to \infty} \frac{\e_s(A, N)}{N^{1+s/d}} \geq \lim_{k\to \infty} \frac{\e_s(A, \mathfrak M(k))}{(\mathfrak M(k) + \mathfrak N(k) )^{1+s/d}}\\
            & = \lim_{k\to \infty} \frac{\e_s(A, \mathfrak M(k))}{(\mathfrak M(k) )^{1+s/d}} \left(\frac{\mathfrak M(k) }{\mathfrak M(k) + \mathfrak N(k)} \right)^{1+s/d} = \lim_{\mathfrak M \ni N \to \infty} \frac{\e_s(A, N)}{N^{1+s/d}},
    \end{aligned}
   \end{equation}
    in view of $ \mathfrak N(k) = o(\mathfrak M(k))$. Similarly, 
    \begin{equation}\label{eq:tag2-1} 
                \limsup_{(\mathfrak M - \mathfrak N) \ni N \to \infty} \frac{\e_s(A, N)}{N^{1+s/d}}  \leq \lim_{\mathfrak M \ni N \to \infty} \frac{\e_s(A, N)}{N^{1+s/d}}.
    \end{equation}
    For the converse estimates, use Corollary~\ref{cor:point_energy} to conclude that for every $ N\in \mathbb N $ there holds
    \[
        \e_s( A, N+1 )  \leq \e_s(A, N) + C N^{s/d}.
    \]
    Applying this inequality $ \mathfrak N(k) $ times to $ \mathfrak M(k) $, we obtain
    \[
        \e_s[ A, (\mathfrak M + \mathfrak N)(k) ] \leq \e_s(A, \mathfrak M(k)) + \mathfrak N(k)C[\mathfrak M(k) + \mathfrak N(k)]^{s/d},
    \]
    which yields
		\begin{equation}\label{eq:tag1-2}
        \limsup_{(\mathfrak M + \mathfrak N) \ni N \to \infty} \frac{\e_s(A, N)}{N^{1+s/d}}  \leq \lim_{\mathfrak M \ni N \to \infty} \frac{\e_s(A, N)}{N^{1+s/d}}.
    \end{equation}
    Finally, applying Corollary~\ref{cor:point_energy} $ \mathfrak N(k) $ times to $ \mathfrak M(k) - \mathfrak N(k)$ gives
    \[
        \e_s[ A, \mathfrak M (k) ] \leq \e_s[A, (\mathfrak M - \mathfrak N)(k)] + \mathfrak N(k)C\mathfrak M(k)^{s/d},
    \]
    whence, using that $ \mathfrak N (k) = o(\mathfrak M(k)),\, k\to \infty $,
    \begin{equation}\label{eq:tag2-2}
        \liminf_{(\mathfrak M - \mathfrak N) \ni N \to \infty} \frac{\e_s(A, N)}{N^{1+s/d}}  \geq \lim_{\mathfrak M \ni N \to \infty} \frac{\e_s(A, N)}{N^{1+s/d}}.
    \end{equation}
    Combining \eqref{eq:tag1-1} with \eqref{eq:tag1-2} and \eqref{eq:tag2-1} with \eqref{eq:tag2-2}, we get the desired result.
\end{proof}
The proof of the previous lemma  implies the following.
\begin{corollary}
    \label{cor:perturbation}
    If $ \mathfrak M, \mathfrak N \subset \mathbb N $ are a pair of sequences such that 
    \[
        \mathfrak N(k) \leq \theta\,\mathfrak M(k), \qquad k\geq 1,
    \]
    then 
    \[
        \liminf_{(\mathfrak M + \mathfrak N) \ni N \to \infty} \frac{\e_s(A, N)}{N^{1+s/d}}  \geq  \liminf_{\mathfrak M \ni N \to \infty} \frac{\e_s(A, N)}{N^{1+s/d}} \cdot \left( \frac1{1+\theta} \right)^{1+s/d}
    \]
    and
    \[ 
        \limsup_{(\mathfrak M + \mathfrak N) \ni N \to \infty} \frac{\e_s(A, N)}{N^{1+s/d}}  \leq \limsup_{\mathfrak M \ni N \to \infty} \frac{\e_s(A, N)}{N^{1+s/d}} + \frac{C\theta}{1+\theta},
    \]
    where $ C $ is the same as in Corollary~\ref{cor:point_energy}.
\end{corollary}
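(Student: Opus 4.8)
The plan is to import the two one-step comparisons already used in the proof of the preceding lemma and combine them with the crude arithmetic bounds supplied by the hypothesis $\mathfrak N(k)\leq\theta\,\mathfrak M(k)$. Abbreviate $M_k:=\mathfrak M(k)$ and $N_k:=\mathfrak N(k)$, so that $0\leq N_k\leq\theta M_k$ and $(\mathfrak M+\mathfrak N)(k)=M_k+N_k\to\infty$; in particular the one-sided limits over $\mathfrak M+\mathfrak N$ are meaningful. Throughout I use that $\e_s(A,\cdot)$ is nondecreasing in the number of points (immediate from the definition, as in the lemma's proof) together with the increment estimate $\e_s(A,n+1)\leq\e_s(A,n)+Cn^{s/d}$, $n\in\mathbb N$, with $C$ as in Corollary~\ref{cor:point_energy}.

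For the lower bound, monotonicity and $N_k\geq 0$ give $\e_s(A,M_k+N_k)\geq\e_s(A,M_k)$, hence
\[
\frac{\e_s(A,M_k+N_k)}{(M_k+N_k)^{1+s/d}}\ \geq\ \frac{\e_s(A,M_k)}{M_k^{1+s/d}}\left(\frac{M_k}{M_k+N_k}\right)^{1+s/d}\ \geq\ \frac{\e_s(A,M_k)}{M_k^{1+s/d}}\left(\frac1{1+\theta}\right)^{1+s/d},
\]
where the last step uses $M_k/(M_k+N_k)\geq 1/(1+\theta)$, valid uniformly in $k$ precisely because $N_k\leq\theta M_k$. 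Passing to $\liminf$ as $k\to\infty$ and pulling the positive constant out of the $\liminf$ yields the first claimed inequality.

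For the upper bound, apply the increment estimate $N_k$ times starting from $M_k$; since at each of these steps the point count does not exceed $M_k+N_k$ and $n\mapsto n^{s/d}$ is increasing, this gives $\e_s(A,M_k+N_k)\leq\e_s(A,M_k)+CN_k(M_k+N_k)^{s/d}$. Dividing by $(M_k+N_k)^{1+s/d}$, the first resulting term is at most $\e_s(A,M_k)/M_k^{1+s/d}$ (as $N_k\geq 0$ and $\e_s\geq 0$), while the second equals $CN_k/(M_k+N_k)\leq C\theta/(1+\theta)$, again a uniform consequence of $N_k\leq\theta M_k$ (and trivial when $N_k=0$). Taking $\limsup$ as $k\to\infty$ gives the second claimed inequality.

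I do not anticipate a genuine obstacle: this is a quantitative refinement of the qualitative perturbation lemma, and the only point needing care is that the ratios $M_k/(M_k+N_k)$ and $N_k/(M_k+N_k)$ are controlled by $\theta$ uniformly in $k$ --- which is exactly the content of the hypothesis.
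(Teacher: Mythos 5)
Your proof is correct and follows exactly the route the paper intends: the corollary is stated as a consequence of the preceding lemma's proof, and you reuse its two ingredients (monotonicity of $\e_s(A,\cdot)$ for the lower bound, the iterated increment estimate from Corollary~\ref{cor:point_energy} for the upper bound), replacing the $o(\mathfrak M(k))$ hypothesis with the uniform bounds $M_k/(M_k+N_k)\geq 1/(1+\theta)$ and $N_k/(M_k+N_k)\leq\theta/(1+\theta)$. No gaps.
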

\begin{proof}[\textbf{Proof of Theorem~\ref{thm:well_posed}.}]
    To show that $ \fun(\cdot) $ is well-defined, it is necessary to verify that (i) existence of the limit $ \fl{\mathfrak N} $ implies that of the limit $ \el{\mathfrak N} $, and (ii) the value of $ \el{\mathfrak N} $ is uniquely defined by $ \fl{\mathfrak N} $. To this end, fix a pair of sequences  $ \mathfrak N_1,\, \mathfrak N_2 \subset \mathbb N $ such that $\fl{\mathfrak N_1} = \fl{\mathfrak N_2}$.

    First assume that $ \mathfrak N_1,\, \mathfrak N_2 $ are multiples of (a subset of) the geometric series, that is, $ \mathfrak N_i = \{M^k n_i \colon  k \in \mathfrak K_i \}, \, i=1,2 $. Observe that \eqref{eq:logsequences} implies $ \{\log_M n_1\} = \{\log_M n_2\} $ and let for definiteness $ n_2 \geq n_1 $; then $ n_2 = M^{k_0} n_1 $ for some integer $ k_0 \geq 1 $. It follows that $ \mathfrak N_i \subset \mathfrak N_0,\, i=1,2, $  with $ \mathfrak N_0 = \{ M^k n_0 \colon  k \geq 1 \} $.
    By Theorem~\ref{thm:subseq}, the limit 
    \[ 
        \lim_{\mathfrak N_0 \ni N \to \infty} \frac{\e_s(A, N)}{N^{1+s/d}}
    \]
    exists, so it must be that the limits over subsequences of $ \mathfrak N_0 $
    \[ 
        \lim_{\mathfrak N_i \ni N \to \infty} \frac{\e_s(A, N)}{N^{1+s/d}},\qquad i=1,2,
    \]
    also exist and are equal, so the function $ \fun(\cdot) $ is well-defined on the subset of $ [0,1] $ of all the sequences  $ \mathfrak N $ with $ \mathfrak N = \{M^k n \colon  k \in \mathfrak K \}$.

    Now let $ \mathfrak N_1,\, \mathfrak N_2 \subset \mathbb N $ be arbitrary. Denote the common value of the limit $ a:= \fl{\mathfrak N_i}, \, i=1,2 $. We shall assume for definiteness that $ a\in [0,1) $; the case of $ a=1 $ can be handled similarly.
    In order to bound $ \mathfrak N_i $ between two sequences of the type $ \{M^k n_i \colon  k \in \mathfrak K_i \} $, discussed above, 
    fix an $ \epsilon > 0 $ such that $ a + 2\epsilon < 1 $, and find an $ N_0 \in \mathbb N $, for which 
    \begin{equation}
        \label{eq:converged_log}
        \left|\{\log_M N_i\} - a \right| < \epsilon, \qquad   N_0 \leq N_i\in \mathfrak N_i, \quad i=1,2.  
    \end{equation}
    By the choice of $ \epsilon, $ the above equation gives $ \lfloor\{\log_M N_1\}\rfloor = \lfloor\{\log_M N_2\}\rfloor $  when $ N_0 \leq N_i\in \mathfrak N_i $.
    Now let $ n_i,\, i=1,2 $ be such that 
    \begin{equation}
        \label{eq:appr_intseq}
        \begin{aligned}
            a - 2\epsilon &\leq  \{\log_M n_1\} \leq a - \epsilon\\
            a + \epsilon &\leq  \{\log_M n_2\}  \leq a + 2\epsilon.
        \end{aligned}
    \end{equation}
    Replacing one of $ n_i,\, i=1,2, $ with its multiple, if necessary, we can guarantee that $ 0< \log_M n_2 - \log_M n_1 < 4\epsilon  $.
    Consider a pair of sequences $ \nn_i = \{M^k n_i \colon  k \geq \lceil \log_M N_0 \rceil  \}, \, i=1,2 $; observe that by the above argument, limits
    \[ 
        \el {\nn_i} =: L_i,\qquad i=1,2,
    \]
    along $ \nn_i,\, i=1,2, $ both exist, and the inequality 
    \[
        \nn_1( k) \leq N_i \leq \nn_2(k), \qquad  k=\lfloor \log_M N_i \rfloor, \quad  N_0  \leq N_i \in \mathfrak N_i,\, i=1,2,
    \]
    holds.  By the definition of $ \e_s $, and due to \eqref{eq:converged_log}--\eqref{eq:appr_intseq},
    \[ 
        \limsup_{\mathfrak N_i \ni N \to \infty} \frac{\e_s(A, N)}{N^{1+s/d}} 
        \leq \lim_{k \to \infty} \frac{\e_s(A, M^k n_2)}{(M^k n_1)^{1+s/d}} = \left(\frac{n_2}{n_1}\right)^{1+s/d} L_2,\qquad i=1,2,
    \]
    and 
    \[ 
        \liminf_{\mathfrak N_i \ni N \to \infty} \frac{\e_s(A, N)}{N^{1+s/d}} \geq \lim_{k \to \infty} \frac{\e_s(A, M^k n_1)}{(M^k n_2)^{1+s/d}} = \left(\frac{n_1}{n_2}\right)^{1+s/d} L_1,
        \qquad i=1,2.
    \]
    Combining the last two inequalities gives
    \[
        \left(\frac{n_1}{n_2}\right)^{1+s/d} L_1 \leq \liminf_{\mathfrak N_i \ni N \to \infty} \frac{\e_s(A, N)}{N^{1+s/d}}
        \leq  \limsup_{\mathfrak N_i \ni N \to \infty} \frac{\e_s(A, N)}{N^{1+s/d}} 
        \leq \left(\frac{n_2}{n_1}\right)^{1+s/d} L_2,
    \]
    so it suffices to show that $ L_2 $ can be made arbitrarily close to $ L_1 $ by taking $ \epsilon \to 0 $. The latter follows from Corollary~\ref{cor:perturbation}, and the choice of $ n_i,\, i=1,2 $: 
    \[ 
        0 \leq \frac{\nn_2(k) - \nn_1(k)}{\nn_1(k)} = \frac{n_2}{n_1} - 1 \leq M^{4\epsilon} -1.
    \]
    Taking $ \epsilon \to 0 $ shows both that $ \el{\mathfrak N_1} = \el{\mathfrak N_2} $, and that these two limits exist. The function $ \fun: [0,1] \to (0,\infty) $ is therefore well-defined. Note that repeating the above argument for  $ |\fl{\mathfrak N_1} - \fl{\mathfrak N_2}| < \epsilon  $  for a fixed positive $ \epsilon $ gives a bound on $ |\el{\mathfrak N_1} - \el{\mathfrak N_2}| $, which implies that $ \fun $ is continuous. This completes the proof. 
\end{proof}

\begin{proof}[\textbf{Proof of Theorem~\ref{thm:prec_lowupfract}.}]
    Assume without loss of generality that the diameter of the set $ A $ satisfies
    \[
        \diam(A) = 1.
    \]
    Denote the minimal value of the Riesz $ s $-energy on $ M $ points on $ A $ by $ \e_{s,M} := \mathcal E_s(A, M) $; recall also that $ \sigma $ is the lower bound on the distance between $ A_i,\, A_j $ when $ i\neq j $.
    With this assumption, the last inequality in \eqref{eq:MNbound} with $ N=M $ gives 
    \begin{equation}
        \label{eq:MboundL} 
        \begin{aligned}
            \e_s(A,M^{k+1})
            &  \leq M^{k(1+s/d)} \e_{s,M} + \sigma^{-s}\frac {M^{1-s/d}}{(M^{s/d-1}-1)}  M^{(k+1)(1+s/d)} \\ 
            & \leq  M^{k(1+s/d)}\sigma^{-s} M^2 + \sigma^{-s}\frac {M^{2}}{(M^{s/d-1}-1)}  M^{k(1+s/d)}\\
            & =  M^{k(1+s/d)}\sigma^{-s} M^2 \left(1 + \frac1{M^{s/d-1} -1}\right)\\
            & =  M^{(k+1)(1+s/d)}\frac{\sigma^{-s}}{M^{s/d-1} -1}.  \\
    \end{aligned}
    \end{equation}
    On the other hand, consider a configuration $ \omega_{M^{k+1}+M^k} $. The set $ A $ is partitioned by the $ M^{k+1} $ subsets 
    \[
        A_{m_1\ldots m_{k+1}}, \qquad 1\leq m_1,\ldots, m_{k+1} \leq M,
    \]
    so by the pigeonhole principle, for at least $ M^k $ pairs $ i\neq j $, the points $ \bs x_i,\, \bs x_j \in \omega_{M^{k+1}+M^k} $ belong to the same subset $ A_{m_1\ldots m_{k+1}} $. Writing $ r $ for the common contraction ratio of the defining similitudes $ \{ \psi_m\colon  1\leq m\leq M \} $ preserving the set $ A $, we have
    \[
        \diam(A_{m_1\ldots m_{k+1}}) = r^{k+1}\diam(A) = r^{k+1}.
    \]
    Configuration $ \omega_{M^{k+1}+M^k} $ was chosen arbitrarily, so it follows,
    \begin{equation}
        \label{eq:MboundU}
        \e_s(A,M^{k+1}+M^{k}) \geq M^k (r^{k+1})^{-s} = M^k (M^{s/d})^{k+1} = M^{s/d} (M^k)^{1+s/d},
    \end{equation}
    where we used that $ r^{-s} = M^{s/d} $ when all the contraction ratios are equal. Combining equations \eqref{eq:MboundL}--\eqref{eq:MboundU} gives
    \[
        \begin{aligned}
            \g(A) \big/ \G(A) 
            & \leq \limsup_{k\to \infty}\frac{\e_s(A,M^{k+1})}{ (M^{k+1})^{1+s/d} } 
            \,\big/\, \liminf_{k\to \infty}\frac{\e_s(A,M^{k+1}+M^{k})}{ (M^{k+1}+M^{k})^{1+s/d} }\\
            & = \frac{\sigma^{-s} }{M^{s/d-1} -1}  \big/ \frac{1}{M(1 + 1/M)^{1+s/d}}\\ 
            & = \frac{M(1 + 1/M)^{1+s/d} }{\sigma^{s}(M^{s/d-1} -1)}.  \\ 
        \end{aligned}
    \] 
    After substituting  $ 1/M = r^d $, the last inequality can be rewritten as
    \[ 
        \begin{aligned}
            \g(A) \big/ \G(A)
            & \leq \left(\frac{r(1 + r^d)^{1/d}}{\sigma}\right)^s \cdot \frac {M^{s/d-1}}{M^{s/d-1}-1}\cdot M(M+1)\\ 
            & = R^s \cdot \frac {M^{s/d-1}}{M^{s/d-1}-1}\cdot M(M+1).
    \end{aligned}
    \]
    Note that the second factor in the above equation is less than $ 2 $ when $ s > 2d $ holds (since $ M\geq 2 $); for an $ R <1 $, choosing the Riesz exponent as in \eqref{eq:sbound} makes the RHS less than $ 1 $, as desired.

\end{proof}
\begin{proof}[\textbf{Proof of Corollary~\ref{cor:ternary_cantor}.}]
    The proof repeats that of Theorem~\ref{thm:prec_lowupfract}, except for the simplified expression for  $ \mathcal E_{s,M} = \mathcal E_{s,2} = 1 $. Equations \eqref{eq:MboundL}--\eqref{eq:MboundU} become
    \[
        \begin{aligned}
            \e_s(A,2^{k+1}) & =  2^{(k+1)(1+s/d)}\frac{1}{2^2(2^{s/d-1} -1)},\\
            \e_s(A,2^{k+1}+2^{k}) & \geq 2^{s/d} (2^k)^{1+s/d},
        \end{aligned}
    \]
    respectively. Finally, from
    \[ 
        \begin{aligned}
            \g(A) \big/ \G(A)
            & \leq \frac{2(3/2)^{1+s/d}}{2^2(2^{s/d-1} -1)}  = \left(\frac{3}{4}\right)^{s/d} \cdot \frac{2^{s/d-1}}{2^{s/d-1} -1}\cdot\frac{3}{2}.
        \end{aligned}
    \]
    The RHS is a decreasing function of $ s $ and is less than $ 1 $ for $ s\geq 3d = 3\dim_H A = 3 \log_3 2 $, which completes the proof.
\end{proof}

\textbf{Acknowledgments.} The authors are grateful to Peter Grabner, Douglas Hardin, and Edward Saff for stimulating discussions. Part of this work was done while O.V.\ was in residence at the Institute for Computational and Experimental Research in Mathematics in Providence, RI during the ``Point Configurations in Geometry, Physics and Computer Science'' program, supported by the NSF grant DMS-1439786. Both authors would like to thank ICERM for the hospitality and for providing a welcoming environment for collaborative research.

\bibliographystyle{acm}
\bibliography{refs}
\end{document}